\def\cH{\mathcal{H}}
\def\Q{\mathbb{Q}}
\def\R{\mathbb{R}}
\def\Z{\mathbb{Z}}
\def\RP{\mathbb{R}P}
\def\Id{\text{Id}}
\def\ol{\overline}
\def\sm{\setminus}
\def\lk{\text{lk}}
\def\alphas{\boldsymbol{\alpha}}
\def\betas{\boldsymbol{\beta}}
\def\SW{\text{SW}}
\def\Spin{\text{Spin}}
\def\fs{\mathfrak{s}}
\DeclareMathOperator{\Int}{Int}
\DeclareMathOperator{\Hom}{Hom}
\newtheorem{thm}{Theorem}[section]
\newtheorem{cor}[thm]{Corollary}
\newtheorem{lem}[thm]{Lemma}
\newtheorem{prop}[thm]{Proposition}
\theoremstyle{definition}
\newtheorem{defn}[thm]{Definition}
\theoremstyle{definition}
\newtheorem{rem}[thm]{Remark}
\newtheorem{conj}[thm]{Conjecture}
\begin{document}

\title{Examples of topologically unknotted tori}

	\author{Andr\'{a}s Juh\'{a}sz}
	\address{Mathematical Institute, University of Oxford, Andrew Wiles Building,
		Radcliffe Observatory Quarter, Woodstock Road, Oxford, OX2 6GG, UK}
	\email{juhasza@maths.ox.ac.uk}
	
	\author{Mark Powell}
	\address{School of Mathematics and Statistics, University of Glasgow, University Place, Glasgow, G12 8QQ, UK}
	\email{mark.powell@glasgow.ac.uk}
	
	\def\subjclassname{\textup{2020} Mathematics Subject Classification}
	\expandafter\let\csname subjclassname@1991\endcsname=\subjclassname
	\subjclass{
		57K40, 
		57K10, 
		57N35. 
	}
	\keywords{4-manifolds, embedded surfaces, isotopy}
	
	\begin{abstract}
		We show that certain smooth tori with group $\Z$ in $S^4$ have exteriors with standard equivariant intersection forms, and so are topologically unknotted. These include the turned 1-twist-spun tori in the 4-sphere constructed by Boyle, the union of the genus one Seifert surface of Cochran and Davis that has no slice derivative with a ribbon disc, and tori with precisely four critical points whose middle level set is a 2-component link with vanishing Alexander polynomial. This gives evidence towards the conjecture that all $\Z$-surfaces in $S^4$ are topologically unknotted, which is open for genus one and two. It is unclear whether these tori are smoothly unknotted, except for tori with four critical points whose middle level set is a split link. The double cover of $S^4$ branched along any of these surfaces is a potentially exotic copy of $S^2 \times S^2$, and, in the case of turned twisted tori, we show they cannot be distinguished from $S^2 \times S^2$ using Seiberg--Witten invariants.
	\end{abstract}
	
	\maketitle
	
	\section{Introduction}
	We study  the topological isotopy problem for locally flat embeddings of closed, orientable surfaces in $S^4$. Let $\Sigma \subseteq S^4$ be such a surface of genus $g$.  If $\pi_1(S^4 \setminus \Sigma) \cong \Z$, generated by a meridian of $\Sigma$, then we say that $\Sigma$ is a \emph{$\Z$-surface}. The surface $\Sigma$ is \emph{topologically unknotted} if it is topologically isotopic to the standard genus $g$ surface in $S^3$, i.e.\  if it topologically bounds a genus $g$ handlebody. If $\Sigma$ has genus one, then we say that $\Sigma$ is a $\Z$-torus.
	Conway and the second author~\cite{Conway-Powell} have shown that any locally flat, embedded, closed, orientable $\Z$-surface $\Sigma \subseteq S^4$ with $g(\Sigma) \not\in \{1, 2\}$ is topologically unknotted. The topological unknotting conjecture states that this also holds for $g \in \{1,2\}$.

Using the topological classification of $\Z$-surfaces in terms of the equivariant intersection forms of their exteriors due to Conway and the second author~\cite{Conway-Powell}, we show that certain $\Z$-tori are topologically unknotted.  Our three main results, for the three types of tori we investigate, are described in Sections~\ref{subsection-intro-turned}, \ref{subsection-intro-pushed}, and \ref{subsection-intro-crit} below.

We say that a pair of smoothly embedded surfaces in a 4-manifold are \emph{exotic} if they are topologically but not smoothly isotopic. In $S^4$, this is equivalent to saying that they are orientation-preserving ambiently homeomorphic but not orientation-preserving ambiently diffeomorphic.  Finashin, Kreck, and Viro~\cite{FKV} constructed infinitely many exotic copies of $\# 10 \RP^2$ in $S^4$. This was later improved by Finashin~\cite{Finashin} to $\# 6 \RP^2$ in $S^4$. These are distinguished by their double branched covers.
Further improvements were recently announced by Miyazawa~\cite{miyazawa2023gauge} for $\RP^2$ and Mati\'c, \"Ozt\"urk, and Stipsicz~\cite{matic2023exotic} for $5\RP^2$.
However, there is no example known of an exotic pair of closed, orientable surfaces in $S^4$.

This work can be viewed as collecting evidence towards the genus one topological unknotting conjecture. On the other hand, the examples we consider are potentially exotic surfaces, i.e.\ so far as we know, they could be counterexamples to the smooth unknotting conjecture for tori; see \cref{sec:dbc}. Furthermore, if we take the double cover of $S^4$ branched along any of these tori $T$, we obtain a potentially exotic copy of $S^2 \times S^2$.

\subsection{Turned twisted tori}\label{subsection-intro-turned}
	Given a knot $K$ in $S^3$, Artin~\cite{Artin-spinning} associated to it a 2-knot (i.e.\  an embedded 2-sphere) in $S^4$ using a construction called \emph{spinning}. This was later extended by Zeeman~\cite{Zeeman-twisting} to \emph{twist-spinning}, and then by Litherland~\cite{Litherland-deform-spinning} to \emph{deform-spinning}.
	Building on this, in 1993, Boyle~\cite{Boyle} associated to a knot $K$ in $S^3$ and integers $i$, $j$ the \emph{$i$-turned $j$-twist-spun torus} in $S^4$, which we will denote by~$T_{K,i,j}$. See Section~\ref{sec:twist-spun-tori} for the definition. The smooth isotopy class of~$T_{K,i,j}$ only depends on the parity of $i$. At the end of his paper, Boyle noted that the torus~$T_{K,1,1}$ is a $\Z$-torus and asked whether it is standard.
	
	\begin{restatable}{thm}{Boyle}\label{thm:Boyle}
	For every knot $K$ in $S^3$, the turned 1-twist-spun torus $T_{K,1,1}$ is topologically unknotted.
	\end{restatable}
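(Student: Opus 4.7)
The plan is to apply the Conway--Powell topological classification of $\Z$-surfaces: two locally flat $\Z$-surfaces in $S^4$ of the same genus are topologically ambiently isotopic if and only if the equivariant intersection forms on $H_2$ of the infinite cyclic covers of their exteriors are isomorphic as hermitian forms over $\Z[t^{\pm 1}]$. Boyle already established that $\pi_1(S^4 \sm T_{K,1,1}) \cong \Z$, so it suffices to identify the equivariant intersection form of the exterior $X_{K,1,1} := S^4 \sm \nu(T_{K,1,1})$ with that of a standardly embedded torus of genus one.

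To compute this form I would first produce an explicit handle decomposition of $X_{K,1,1}$ exploiting Boyle's construction. By Zeeman's theorem, the $1$-twist-spin $\tau K$ is the unknotted $2$-sphere in $S^4$ (it is smoothly fibered by $3$-balls), so the exterior of $\tau K$ is diffeomorphic to $S^1 \times D^3$. Boyle's turning is a local modification of $\tau K$ along a distinguished unknotted circle $\gamma \subseteq \tau K$, which I would interpret as an ambient surgery producing the torus $T_{K,1,1}$ together with a prescribed modification of the handle structure on the exterior. Using the $S^1$-symmetry of the twist-spin construction, one can track this modification at the handle level and lift the resulting decomposition to the infinite cyclic cover $\widetilde{X}_{K,1,1}$. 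From there I would compute $H_2(\widetilde{X}_{K,1,1})$ as a $\Z[t^{\pm 1}]$-module, write down its hermitian intersection form, and match it with the rank-two standard form for the exterior of an unknotted torus.

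The main obstacle is ensuring that the dependence on the knot $K$ drops out of the equivariant intersection form. The individual attaching curves of the $2$-handles of $X_{K,1,1}$ will a priori depend on $K$, but the resulting $\Z[t^{\pm 1}]$-module and hermitian form should be independent of $K$ because the turning is localized near $\gamma$ and the cyclic cover sees only the linking data modulo the $\Z$-action generated by the twist-spin symmetry. Verifying this $K$-invariance cleanly is the technical heart of the argument; once it is done, the equality of forms is a short computation, and the Conway--Powell classification immediately delivers a topological ambient isotopy from $T_{K,1,1}$ to the standard torus.
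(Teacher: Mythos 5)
Your high-level framework is exactly right: reduce the theorem to showing the equivariant intersection form of the exterior $E_{T_{K,1,1}}$ is isometric to the standard form $\mathcal{H}_2$, and then invoke the Conway--Powell classification. This matches the paper's strategy. However, the proposal stops short of a proof because the central computation is never carried out. You propose to build a handle decomposition of the exterior, lift it to the infinite cyclic cover, read off the $\Lambda$-module structure and intersection form, and "match" it to the standard form. You then state that "verifying this $K$-invariance cleanly is the technical heart of the argument," and that "once it is done, the equality of forms is a short computation." That is precisely the step that needs to be done; without it there is no proof. The claim that the form is independent of $K$ because "the turning is localized" and "the cyclic cover sees only linking data modulo the $\Z$-action" is plausible as intuition but is not an argument. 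In fact, carrying this out via an explicit handle decomposition of $E_{T_{K,1,1}}$ that depends on $K$ and then showing the $\Lambda$-form is $K$-independent is not obviously easier than computing the form directly.

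There is also a geometric imprecision worth flagging. You describe Boyle's turning as "a local modification of $\tau K$ along a distinguished unknotted circle $\gamma \subseteq \tau K$." In Boyle's construction, the turn is implemented by changing the gluing map $\eta_i$ on $S^2 \times S^1$, which is a Gluck twist along the standard 2-sphere $S_0 = S^2 \times \{0\}$ that \emph{links} the torus; it is not surgery along a curve on the twist-spun knot (nor on the torus). Also, since $T_{K,1,1}$ is a torus, one needs the exterior of the torus, whereas Zeeman's theorem applies to the 2-sphere $\tau K$, so the identification of exteriors you suggest requires more care. The paper sidesteps the handle decomposition entirely: it produces two explicit $\Z$-trivial immersed 2-spheres in the exterior --- the linking sphere $S_0 = \partial D^3 \times \{0\}$, and a sphere $S_1$ obtained by surgering a rim torus $T_\ell$ over a longitude along a boundary-twisted disc $D_1$ --- computes the pairwise $\Lambda$-intersection numbers directly via Wall self-intersection numbers and the geometry of the surgered rim torus (\cref{lem:computing-intersections}), and feeds the resulting matrix into \cref{lemma:main-strategy}. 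That computation is the content of the proof, and your proposal leaves it as a gap.
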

	
	We also observe in \cref{sec:dbc} that the double cover of $S^4$ branched along $T_{K,1,1}$ is $(S^2 \times S^2) \# X$ for a homotopy 4-sphere $X$. So one cannot use the Seiberg--Witten invariants of the double branched covers to smoothly distinguish $T_{K,1,1}$ and the standard torus $T^2 \subseteq S^3 \subseteq S^4$.
	

\subsection{Seifert surface union a ribbon disc}\label{subsection-intro-pushed}
	Recall that, given a knot $K$ in $S^3$, a \emph{homotopy ribbon disc} $D$ for $K$ is a locally flat disc embedded in $D^4$ such that $\partial D = K$ and the map
	\[
	\pi_1(S^3 \setminus K) \to \pi_1(D^4 \setminus D)
	\]
	is surjective. In particular, every ribbon disc is homotopy ribbon; see Gordon~\cite{Gordon-ribbon}.
	Let $K$ be a knot in $S^3$ with Seifert surface $F$ and homotopy ribbon disc $D$. Then $\Sigma := F \cup D \subseteq S^4$ is a $\Z$-surface.
	
	A \emph{slice derivative} for a genus $g$ Seifert surface $F$ for a knot $K$ in $S^3$ is a collection of $g$ pairwise disjoint, homologically linearly independent simple closed curves that form a smoothly slice link and have self-linking zero~\cite{CHL-derivatives}. If $F$ has a slice derivative, we can perform surgery on $F$ in $D^4$ along a slice derivative and obtain a smooth slice disc $D$ for $K$. Then $F \cup D$ is smoothly unknotted; see Lemma~\ref{lem:slice-derivative}. If $D$ is a ribbon disc that is not obtained from a Seifert surface $F$ by surgering along a slice derivative, Gabai has asked whether $F \cup D$ is topologically or smoothly unknotted.
	
	Kauffman conjectured that every genus one Seifert surface for a slice knot admits a slice derivative.
	Cochran and Davis~\cite{Cochran-Davis} have shown that there exists a ribbon knot admitting a genus one Seifert surface $F$ that has no slice derivative. Moreover, there is an example where $F$ is the unique minimal genus one Seifert surface, up to isotopy. The union of $F$ and a ribbon disc is therefore a potentially smoothly knotted $\Z$-torus in $S^4$. We show that it is topologically unknotted.
 	
	\begin{restatable}{thm}{ribbon}\label{thm:ribbon}
		Let $K$ be the knot in $S^3$ with genus one Seifert surface $F$ and ribbon disc $D$ constructed by Cochran and Davis. Then $\Sigma := F \cup D$ is topologically unknotted.
	\end{restatable}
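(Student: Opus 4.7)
The strategy is to invoke the Conway--Powell classification of $\Z$-surfaces in $S^4$, according to which the topological isotopy class of a $\Z$-surface is determined by the isometry class of the equivariant intersection form on $H_2$ of the infinite cyclic cover of its exterior. As the excerpt observes, $\Sigma = F \cup D$ is automatically a $\Z$-surface whenever $D$ is homotopy ribbon, so it suffices to show that the equivariant intersection form of $X_\Sigma := S^4 \setminus \nu(\Sigma)$ is isometric to the corresponding form of the standardly embedded unknotted torus in $S^4$.

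To compute this form, I would decompose $S^4 = D^4_- \cup_{S^3} D^4_+$, pushing $F$ slightly into $D^4_-$ along a collar of $S^3$ and keeping $D \subset D^4_+$, so that $F \cap D = K \subset S^3$ and $X_\Sigma$ decomposes as $X_F \cup_{X_K} X_D$. Applying Mayer--Vietoris to the infinite cyclic covers induced by $\pi_1(X_\Sigma) \cong \Z$, I would express $H_2(\wt{X}_\Sigma;\Z)$ as a $\Z[t,t^{-1}]$-module in terms of the Seifert matrix $V$ of $F$ (which controls the Seifert-surface side via Trotter-type formulas involving $(1-t)V$ and $(1-t^{-1})V^T$) and the Alexander data of the ribbon disc $D$. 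A natural basis for $H_2(\wt{X}_\Sigma)$ arises from lifts of Seifert pushoffs of a symplectic basis of curves on $F$, capped off using the ribbon discs provided by $D$; this yields an explicit $2 \times 2$ Hermitian matrix over $\Z[t,t^{-1}]$ whose entries are determined by the specific Cochran--Davis Seifert matrix and ribbon structure.

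The main obstacle is verifying that this matrix is isometric to the standard form for the exterior of the unknotted torus. Since $F$ admits no slice derivative, one cannot simplify the form by a geometric surgery of $F$ along a sliced link (which would reduce one directly to the standard form via a basis change realised by the surgery); instead, the isometry must be established algebraically by exhibiting some $P \in \mathrm{GL}_2(\Z[t,t^{-1}])$ which conjugates the computed matrix into the standard one. The crucial input is the interplay between the Cochran--Davis Seifert matrix and the Alexander structure of $D$, which I expect to conspire so that the resulting form is algebraically standard even though it cannot be made geometrically standard within the smooth category. This algebraic-but-not-geometric isometry is precisely the phenomenon that makes the theorem furnish evidence for the topological unknotting conjecture without implying smooth unknotting.
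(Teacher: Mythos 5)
Your proposal takes a genuinely different route from the paper, and it contains a real gap.

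The paper does not perform a direct Mayer--Vietoris computation of the equivariant intersection form of $E_\Sigma$. Instead it exploits the specific structure of the Cochran--Davis construction: the knot $K$, the Seifert surface $F$, and the ribbon disc $D$ are all produced by a satellite construction along an annulus $A$ in $D^4$ from a simpler ribbon knot $R$, whose Seifert surface $F'$ \emph{does} admit a slice derivative, so that $\Sigma' := F' \cup \Delta$ is smoothly unknotted by \cref{lem:slice-derivative}. Performing the satellite construction simultaneously on copies $A_\pm \subseteq S^4_\pm$ realises $\Sigma$ as the image of $\Sigma'$ under a $t$-twist $r$-roll knot surgery along the smoothly unknotted torus $T_A = A_+ \cup A_-$, and the paper then invokes \cref{thm:knot-surgery}, whose hypothesis (that a primitive push-off $s(m)$ is $0$-homotopic in $S^4 \setminus (\Sigma' \cup T_A)$) is verified directly. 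The intersection-form computation in \cref{thm:knot-surgery} is carried out by transporting the spheres $S_{\alpha_i}, S_{\beta_j}$ from the unknotted exterior $E_{\Sigma'}$ across the knot surgery, using \cref{prop:intersection} to show that the $\Lambda$-pairing is unchanged. This avoids ever having to compute the form from the Seifert matrix of $K$.

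The gap in your proposal is precisely at the point you flag as ``the main obstacle.'' The Mayer--Vietoris decomposition $E_\Sigma = E_F \cup_{E_K} E_D$ is used in the paper only in \cref{prop:alex-poly-one-torus}, and there it works because $\Delta_K \doteq 1$ forces $H_1(E_K;\Lambda) = H_2(E_K;\Lambda) = 0$ and the same for $E_D$, so the sequence collapses to an isomorphism $H_2(E_F;\Lambda) \cong H_2(E_\Sigma;\Lambda)$, after which \cite[Proposition~7.10]{Conway-Powell} finishes the job. The Cochran--Davis knot has $\Delta_K \not\doteq 1$, so $H_1(E_K;\Lambda)$ and $H_1(E_D;\Lambda)$ are nontrivial torsion modules, the Mayer--Vietoris boundary map is genuinely in play, and there is no obvious reason the resulting $2\times 2$ Hermitian matrix over $\Z[t,t^{-1}]$ is even presented in a form amenable to explicit diagonalisation. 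Moreover, your proposed generators --- ``Seifert pushoffs capped off using the ribbon discs'' --- do not exist as described: since $F$ has no slice derivative, the curves in a symplectic basis for $H_1(F)$ are not slice, so they cannot be capped off disjointly in $D^4_+$ by discs provided by $D$. The natural generators of $H_2(E_\Sigma;\Lambda)$ are surgered rim tori (cf. \cref{prop:homology-closed}\eqref{it:H2-lanbda} and \cref{lem:computing-intersections}), and producing the surgery discs and verifying \cref{lemma:main-strategy}'s normal form $\bigl(\begin{smallmatrix}0 & 1-t \\ 1-t^{-1} & c(1-t)+\bar c(1-t^{-1})\end{smallmatrix}\bigr)$ directly from the Seifert/ribbon data of the Cochran--Davis knot would require substantial work that the proposal does not supply. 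The expectation that the Seifert matrix and Alexander data ``conspire'' to give an algebraically standard form is exactly what needs to be proved, and the paper's route via knot surgery along $T_A$ is how the conspiracy is actually exhibited.
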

	
In \cref{prop:alex-poly-one-torus}, we note that a genus one Seifert surface union a slice disc for a knot $K$ is a topologically unknotted $\Z$-torus also when $\Delta_K\doteq 1$.

We will show in Section~\ref{sec:dbc} that the double cover $M$ of $S^4$ branched along $\Sigma$ can be obtained from $S^2 \times S^2$ by knot surgery along a 0-homologous torus. By Theorem~\ref{thm:ribbon}, the 4-manifolds $M$ and $S^2 \times S^2$ are homeomorphic. We do not know whether $M$ is diffeomorphic to $S^2 \times S^2$ or even whether it is irreducible. One cannot distinguish $M$ from $S^2 \times S^2$ using the knot surgery formula of Fintushel and Stern~\cite{FSKnotSurgery}, since it requires the torus to be homologically essential.

Theorem~\ref{thm:ribbon} follows from the following result on knot surgery along a torus in the complement of an unknotted surface in $S^4$.

\begin{restatable}{thm}{knotsurgery}\label{thm:knot-surgery}
	Let $\Sigma \subseteq S^4$ be a smoothly embedded, topologically unknotted, oriented surface of genus~$g$. Furthermore, let $T \subseteq S^4 \setminus N(\Sigma)$ be a smoothly embedded torus that is unknotted in $S^4$, and let $J$ be a knot in $S^3$.
	Suppose that the push-off $s(m)$, for some primitive, essential curve $m$ on $T$, is 0-homotopic in $S^4 \setminus (\Sigma \cup T)$. Then the $t$-twist, $r$-roll knot surgery $\Sigma_{t, r}(T, J)$ is topologically unknotted for all $t$, $r \in \Z$.
\end{restatable}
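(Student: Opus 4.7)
The strategy is to invoke the Conway--Powell classification~\cite{Conway-Powell}: a locally flat $\Z$-surface in $S^4$ is topologically unknotted if and only if its exterior carries the standard equivariant intersection form over $\Lambda := \Z[t^{\pm 1}]$. So the plan reduces to constructing a homeomorphism of pairs $(S^4_{t,r}(T,J), \Sigma_{t,r}(T,J)) \cong (S^4, \Sigma)$, after which topological unknottedness of $\Sigma_{t,r}(T,J)$ follows from that of $\Sigma$.

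The first step is to apply Freedman's topological disk embedding theorem to upgrade the null-homotopy of $s(m)$ in $S^4 \setminus (\Sigma \cup T)$ to an embedded locally flat disk $D \subset S^4 \setminus (\Sigma \cup T)$ with $\partial D = s(m)$. This requires verifying that $\pi_1$ of the complement is good (the abelianization is $\Z \oplus \Z$ by Alexander duality, so one must check that the group is at least poly-abelian) and exhibiting an algebraic dual for the immersed null-homotopy, which should come from a meridian 2-sphere of $T$ tubed appropriately. Once $D$ is embedded, a regular neighborhood $C := N(T) \cup \nu(D)$ is a standard topological ``cusp'' piece inside which $T$ is unknotted. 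A topological version of the Fintushel--Stern Kirby calculus then shows that knot surgery along $T$ inside $C$ can be undone by a self-homeomorphism of $C$ fixed on $\partial C$; extending by the identity on $S^4 \setminus C$, which contains $\Sigma$, yields the desired homeomorphism of pairs, and all three Conway--Powell conditions are inherited from the original unknottedness of $\Sigma$.

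The main obstacle is proving the topological triviality of the $(t, r)$-twisted knot surgery inside $C$: the twist parameter modifies the longitudinal framing of the regluing, while the roll twists the $S^1$-factor in $(S^3 \setminus \nu(J)) \times S^1$, and both modifications must be absorbed into the homeomorphism of $C$. One approach is to decompose knot surgery into a sequence of $\pm 1$-framed Dehn surgeries along circles in the complement of $T$ following Fintushel--Stern, and then to use $D$ as a topological framing disk that lets each such circle be slid until it bounds an embedded disk, trivializing each Dehn surgery regardless of $(t,r)$. An algebraic alternative, which sidesteps the geometric triviality argument, is to compute the equivariant intersection form of the surgered exterior $E' = (E \setminus N(T)) \cup_\phi (S^3 \setminus \nu(J)) \times S^1$ directly via a Mayer--Vietoris calculation in the infinite cyclic cover of $E := S^4 \setminus N(\Sigma)$: the null-homotopy of $s(m)$ forces the Alexander-module contribution of $J$ to be paired with a trivial $S^1$-factor and so to cancel, leaving the standard form of $E$ unchanged.
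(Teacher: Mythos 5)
Your main line of attack—constructing a homeomorphism of pairs $(S^4_{t,r}(T,J), \Sigma_{t,r}(T,J)) \cong (S^4, \Sigma)$ and then citing unknottedness of $\Sigma$—is a genuinely different route from the paper, which never produces such a homeomorphism. Instead, the paper works entirely inside the surgered exterior: it transports the standard basis of surgered rim tori $\{S_\gamma\}$ (which realises $\bigoplus_g \cH_2$ in $E_\Sigma$) across the knot surgery by replacing the disc components of $S_\gamma \cap N(T)$ with Seifert surfaces of $J$ in slices $E_J \times \{p\}$, uses the hypothesis on $s(m)$ to verify (Lemma~\ref{lem:pi1}) that $\pi_1(E_J) \to \pi_1(E)$ is trivial so the new surfaces are $\Z$-trivial, checks (Proposition~\ref{prop:intersection}) that the $\Lambda$-intersections are unchanged, and then invokes Lemma~\ref{lemma:main-strategy} together with Theorem~\ref{thm:intersection-form}. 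Note that your secondary, parenthetical suggestion (a Mayer--Vietoris computation in the infinite cyclic cover) is in the same spirit as the paper, but as written it is a heuristic rather than an argument: you would need to actually identify generators of $H_2(E';\Lambda)$ and compute the form on them, which is precisely what the paper's explicit surfaces accomplish.

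Your primary route has two concrete gaps. First, applying the disc embedding theorem to $S^4 \setminus (\Sigma \cup T)$ is not routine: the hypotheses guarantee only that $H_1 \cong \Z \oplus \Z$, not that $\pi_1$ is good, and the proposed dual sphere is not established. A meridian $\mu_T$ is a curve on $\partial N(T)$, not a $2$-sphere, and there is no obvious $2$-sphere meeting a disc bounded by the longitude $s(m)$ algebraically once; you assert this would ``come from a meridian 2-sphere of $T$ tubed appropriately'' but do not explain what to tube or why the result would be geometrically dual. Second, and more seriously, the claim that ``knot surgery along $T$ inside $C$ can be undone by a self-homeomorphism of $C$ fixed on $\partial C$'' is unsupported. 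The piece $C = N(T) \cup \nu(D)$ is $S^1 \times D^3$, not a Fintushel--Stern cusp, and it is not clear that the $(t,r)$-twisted knot surgery on $T \subseteq C$ is topologically trivial rel $\partial C$. Even granting Kim--Ruberman's diffeomorphism $S^4_{t,r}(T,J) \cong S^4$, that map has no reason to restrict to the identity outside $C$, so one cannot simply extend by the identity to conclude $\Theta(\Sigma) = \Sigma$. This is the crux of the theorem, and your proposal replaces it with an assertion. Absent these two steps, the proof does not go through; by contrast, the paper's intersection-form computation sidesteps both issues and is unconditional given the Conway--Powell classification.
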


For further details on the $t$-twist $r$-roll knot surgery construction involved, see Section~\ref{sec:knot-surgery}.

\subsection{Tori with four critical points}\label{subsection-intro-crit}

Let $\Sigma \subseteq S^4$ be a smoothly embedded torus. Consider $S^4 = S^4_+ \cup S^4_-$, where $S^4_{\pm} \cong D^4$.  Let $r \colon D^4 \to I$ be the radial function, and let $f_{\pm} \colon S^4_{\pm} \to \R$ be given by $\pm(1-r)$. These glue to give a standard Morse function $f \colon S^4 \to \R$, with precisely two critical points $f^{-1}(\pm 1)$ at the centre of $S^4_{\pm}$.  Assume that $f|_\Sigma \colon \Sigma \to \R$ is a Morse function with precisely four critical points. Then we say that $\Sigma$ \emph{has four critical points}.
In this case, the exterior of $\Sigma$ has a handle decomposition with a unique 1-handle, so $\Sigma$ is a $\Z$-torus.

We can isotope $\Sigma$ so that $\Sigma \cap S^4_-$ contains the minimum of $f|_{\Sigma}$ and one critical point of index one, while $\Sigma \cap S^4_+$ contains the maximum of $f|_{\Sigma}$ and one critical point of index one. Then $L:= \Sigma \cap \partial S^4_{\pm}$ is a 2-component link in $\partial S^4_{\pm} = S^3$.

\begin{restatable}{thm}{morse}\label{thm:morse}
Let $\Sigma \subseteq S^4$ be a smoothly embedded torus with four critical points. Suppose that the single-variable Alexander polynomial $\Delta_L$ of the 2-component link $L := \Sigma \cap S^3$ is zero, where $S^3$ is the equator of $S^4$. Then $\Sigma$ is topologically unknotted.
\end{restatable}

This answers a special case of Problem~4.30 of Kirby~\cite{Kirby-problems} in the topological category. Scharlemann~\cite{Scharlemann1985} has shown that every 2-sphere in $S^4$ with four critical points is smoothly unknotted, and Bleiler--Scharlemann~\cite{BleilerScharlemann} have shown that every embedded $\mathbb{RP}^2 \subseteq S^4$ with three critical points is smoothly unknotted. As far as we know, it is open whether every torus with four critical points is smoothly unknotted.
We are not aware of any explicit potentially smoothly knotted examples associated with \cref{thm:morse}. We show the following result about smooth unknotting.

\begin{restatable}{thm}{four}\label{thm:four}
	Let $\Sigma \subseteq S^4$ be a torus with four critical points, and suppose that the link $L := \Sigma \cap S^3$ is split, where $S^3$ is the equator of $S^4$. Then $\Sigma$ is smoothly unknotted.
\end{restatable}

When $L$ is not split, it is not even clear whether the double cover of $S^4$ branched over $\Sigma$ contains a self-intersection zero sphere, so potentially it could be distinguished from $S^2 \times S^2$ using Seiberg--Witten invariants.

\begin{rem}
  An interesting family of examples of $\Z$-surfaces that would be good candidates for potential future investigation are the following. Let $K$ be a knot in the 3-sphere with two non-isotopic genus one Seifert surfaces $F_+$ and $F_-$. Consider $S^4 = S^4_+ \cup_{S^3} S^4_-$, where $S^4_{\pm}$ is a copy of $D^4$, and push the interior of $F_{\pm} \subseteq S^3$ into $S^4_{\pm}$. The result $F_+ \cup_K F_-$ is a genus two $\Z$-surface. Is it topologically, or even smoothly unknotted?
\end{rem}

\subsection*{Notation} Throughout, we write
	\[
	D^3_r := \{v \in \R^3 : |v| \le r\} \text{ and } S^2_r := \{v \in \R^3  :  |v| = r\}.
	\]
	Let $B^3_r := \Int(D^3_r)$. As usual, $D^3 := D^3_1$, $S^2 := S^2_1$, and $B^3 := B^3_1$. If $A$ is a submanifold of $B$, then we write $N(A)$ for an open tubular neighbourhood of $A$ in $B$ and $\nu_{A \subseteq B}$ for the normal bundle of $A$ in $B$. For smooth manifolds $M$ and $N$, we write $M \cong N$ if they are diffeomorphic. Finally, let
	\[
	\R^3_+ := \{(x,y,z) \in \R^3  :  z \ge 0\} \text{ and } \R^3_{>0} := \{(x,y,z) \in \R^3  :  z > 0\}.
	\]

\subsection*{Organisation}

In \cref{section:std-int-form-implies-unknotted}, we set out the strategy that we will use for all our proofs, which is to compute the equivariant intersection pairing of the surface knot exterior, and apply \cite{Conway-Powell}.
In \cref{sec:twist-spun-tori}, we recall the construction of turned, twisted tori.
Then, in \cref{section:proof-turned-tori-unknotted}, we show that they are topologically unknotted, proving \cref{thm:Boyle}. In Section~\ref{sec:knot-surgery}, we prove \cref{thm:knot-surgery} on knot surgeries.
In \cref{sec:Cochran-Davis-examples}, we  prove \cref{thm:ribbon}, and in \cref{section:four-crit-points}, we prove Theorems~\ref{thm:morse} and~\ref{thm:four}.
In \cref{sec:dbc}, we observe that using the double branched cover to detect that some turned twisted torus is exotic would entail finding a counterexample to the smooth 4-dimensional Poincar\'{e} conjecture. Furthermore, we identify the double branched cover of the union of the genus one Seifert surface and the ribbon disc constructed by Cochran and Davis as knot surgery on $S^2 \times S^2$ along a 0-homologous torus.

	\subsection*{Acknowledgements} We would like to thank Anthony Conway, Chris Davis, David Gabai, Jason Joseph, Maggie Miller, Andr\'as Stipsicz, Zolt\'an Szab\'o, and Ian Zemke for helpful discussions and suggestions. We are also particularly grateful to Brendan Owens, who pointed out a mistake in our proof of an earlier version of \cref{thm:morse}, and to an anonymous referee for several very helpful suggestions on the exposition.
	MP was partially supported by EPSRC New Investigator grant EP/T028335/2 and EPSRC New Horizons grant EP/V04821X/2. For the purposes of open access, the authors have applied a CC-BY public copyright license to any Author Accepted Manuscript arising from this submission.
	
	\section{Standard intersection form implies unknotted}\label{section:std-int-form-implies-unknotted}
	
	For a $\Z$-surface $\Sigma \subseteq S^4$, let
	\[
	E_\Sigma := S^4 \setminus N(\Sigma)
	\]
	be the exterior of $\Sigma$.
	Let $\Lambda$ be the group ring $\Z[\Z]$, which is isomorphic to the ring of Laurent polynomials $\Z[t, t^{-1}]$. This admits an involution determined by $t \mapsto t^{-1}$. Consider the Hermitian form
	\[
	\cH_2 :=  \left( \Lambda^2,
	\begin{pmatrix}
		0 & 1 - t \\
		1 - t^{-1} & 0
	\end{pmatrix} \right).
	\]
	The following is a consequence of \cite[Theorem~1.4]{Conway-Powell} and \cite[Lemma~6.1]{Conway-Powell}.
	
	\begin{thm}\label{thm:intersection-form}
		Let $\Sigma \subseteq S^4$ be a $\Z$-surface of genus $g$. Then the $\Lambda$-intersection form of $E_\Sigma$ is isometric to
		\[
		\bigoplus_g \cH_2
		\]
	    if and only if $\Sigma$ is topologically unknotted.
	\end{thm}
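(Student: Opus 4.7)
The plan is to deduce the theorem directly from Theorem~1.4 and Lemma~6.1 of Conway--Powell~\cite{Conway-Powell} by combining a classification statement with a model computation. First, I would invoke Theorem~1.4 of \cite{Conway-Powell}, which asserts that two closed, orientable, locally flat $\Z$-surfaces of the same genus in $S^4$ are topologically ambient isotopic if and only if their exteriors have isometric equivariant $\Lambda$-intersection forms. In particular, a $\Z$-surface $\Sigma$ of genus $g$ is topologically unknotted if and only if $E_\Sigma$ has the same $\Lambda$-intersection form as the exterior $E_{\Sigma_0}$ of the standard genus $g$ surface $\Sigma_0 \subseteq S^3 \subseteq S^4$.

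It then remains to identify the intersection form of $E_{\Sigma_0}$ with $\bigoplus_g \cH_2$, which is the content of Lemma~6.1 of \cite{Conway-Powell}. Conceptually this is a concrete calculation: $\Sigma_0$ bounds a genus $g$ handlebody $H \subseteq S^3$, and one obtains a handle decomposition of $E_{\Sigma_0}$ with $g$ one-handles and $g$ two-handles. Lifting to the infinite cyclic cover and using a standard symplectic basis of $H_1(\Sigma_0)$, each genus contributes a hyperbolic summand $\cH_2$, the off-diagonal entries $1 - t^{\pm 1}$ arising from the standard meridian--longitude linking pattern on $\partial N(\Sigma_0)$.

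Assembling the two ingredients proves the theorem. I do not expect any real obstacle here: the substantive work is entirely contained in the cited paper, and this step is a matter of quoting the classification together with the model calculation. The only thing worth checking is that ``topologically unknotted'' as defined in the introduction matches the notion of topological ambient isotopy used in Theorem~1.4 of \cite{Conway-Powell}; this follows from the locally flat isotopy extension theorem, which turns a topological isotopy from $\Sigma$ to $\Sigma_0$ into an ambient homeomorphism of $S^4$ sending $\Sigma$ to $\Sigma_0$, and hence an isometry of equivariant intersection forms on the exteriors.
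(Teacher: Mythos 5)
Your proposal is correct and matches the paper's approach exactly: the paper's entire ``proof'' is the one-line observation that the theorem is a consequence of Theorem~1.4 and Lemma~6.1 of Conway--Powell. Your additional unpacking of Lemma~6.1 via the handlebody decomposition, and the remark on reconciling ``topologically unknotted'' with ambient isotopy, are accurate but go slightly beyond what the paper records.
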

	
	So, to prove Theorems~\ref{thm:Boyle}, \ref{thm:ribbon}, and~\ref{thm:morse}, it suffices to compute the equivariant intersection pairings of the surface exterior.

	Next, we record some homological facts about the exteriors of $\Z$-surfaces in $S^4$. If $\Sigma \subseteq S^4$ is an embedded surface and $\gamma \subseteq \Sigma$ is a simple closed curve, then we write $T_\gamma$ for the total space of the restriction of the unit normal circle bundle of $\Sigma$ to $\gamma$, which is known as the \emph{rim torus} over $\gamma$.
	
	\begin{prop}\label{prop:homology-closed}
		Let $\Sigma \subseteq S^4$ be a closed, connected, and oriented $\Z$-surface of genus $g$. Then
		\begin{enumerate}[font=\normalfont, label = (\roman*), ref= \roman*]
			\item \label{it:H0-lambda} $H_0(E_\Sigma; \Lambda) \cong \Z$;
			\item \label{it:H1-lambda} $H_1(E_\Sigma; \Lambda) = 0$;
			\item \label{it:boundary-closed}  $H_1(\partial E_{\Sigma}; \Lambda) \cong H_1(\Sigma \times S^1; \Lambda) \cong (\Lambda / (1 - t))^{\oplus 2g}$;
			\item \label{it:H2} $H_2(E_\Sigma; \Z) \cong \Z^{2g}$, generated by rim tori;
			\item \label{it:H2-lanbda} $H_2(E_\Sigma; \Lambda) \cong \Lambda^{2g}$, generated by generically immersed spheres that map to the classes of the rim tori under the surjective map $H_2(E_{\Sigma};\Lambda) \to H_2(E_{\Sigma};\Z)$ induced by tensoring with $\Z$ over~$\Lambda$;
			\item \label{it:H3-lambda} $H_3(E_\Sigma; \Lambda) = 0$.
		\end{enumerate}
	\end{prop}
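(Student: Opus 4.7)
The plan is to handle the six parts in order, using throughout the identification $H_*(E_\Sigma;\Lambda)\cong H_*(\tilde E_\Sigma;\Z)$, where $\tilde E_\Sigma$ is the universal cover (which, since $\pi_1(E_\Sigma)\cong\Z$, is the infinite cyclic cover). Parts (i) and (ii) are immediate: $\tilde E_\Sigma$ is connected and simply connected. For part (iii), the self-intersection $[\Sigma]\cdot[\Sigma]$ vanishes because $H_2(S^4;\Z)=0$, so the oriented rank-$2$ normal bundle of $\Sigma$ has Euler number zero and is trivial, giving $\partial E_\Sigma\cong \Sigma\times S^1$; the induced $\Z$-cover kills $\pi_1(\Sigma)$ and pulls back the universal cover of the $S^1$ factor, so it is $\Sigma\times\R$, whose homology is $H_*(\Sigma;\Z)$ with trivial $t$-action. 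For part (iv), Alexander duality in $S^4$ combined with Poincar\'e duality on $\Sigma$ yields $H_2(E_\Sigma;\Z)\cong H^1(\Sigma;\Z)\cong\Z^{2g}$, and the class corresponding to $[\gamma]\in H_1(\Sigma)$ is, by the linking-pairing description of Alexander duality, represented by the rim torus $[T_\gamma]$.

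For part (vi), Poincar\'e--Lefschetz duality rewrites $H_3(E_\Sigma;\Lambda)\cong H^1(E_\Sigma,\partial E_\Sigma;\Lambda)$. The cohomology long exact sequence of the pair, together with the universal coefficient theorem (applicable here because $\Lambda/(1-t)$ has a length-one free $\Lambda$-resolution), identifies both $H^1(E_\Sigma;\Lambda)$ and $H^1(\partial E_\Sigma;\Lambda)$ with $\mathrm{Ext}^1_\Lambda(\Lambda/(1-t),\Lambda)\cong\Z$. Naturality forces the restriction map between them to be an isomorphism (both generators arise as the Bockstein of the generator of $H_0$), so $H^1(E_\Sigma,\partial E_\Sigma;\Lambda)=0$ and hence $H_3(E_\Sigma;\Lambda)=0$.

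For part (v), three inputs come together. First, the Milnor long exact sequence comparing $\Lambda$- and $\Z$-coefficient homology, combined with parts (ii) and (iv), gives $H_2(E_\Sigma;\Lambda)/(1-t)\cong\Z^{2g}$. Second, because $\partial E_\Sigma\neq\emptyset$ and $\pi_1(E_\Sigma)\cong\Z$ is free, $E_\Sigma$ admits a handle decomposition with one $0$-handle, one $1$-handle (generating the meridian), no $4$-handles, and with all $2$-handle attaching words null-homotopic in the $1$-skeleton; the equivariant cellular chain complex then takes the form $\Lambda^{c_3}\xrightarrow{\partial_3}\Lambda^{c_2}\xrightarrow{0}\Lambda\xrightarrow{1-t}\Lambda$, and combining $\chi(E_\Sigma)=2g$ with part (vi) (so $\partial_3$ is injective with free cokernel) pins down $H_2(E_\Sigma;\Lambda)\cong\Lambda^{2g}$. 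Third, Hurewicz on $\tilde E_\Sigma$ provides $\pi_2(\tilde E_\Sigma)\cong H_2(E_\Sigma;\Lambda)$, so generators are represented by maps $S^2\to\tilde E_\Sigma$; projecting to $E_\Sigma$ produces generically immersed spheres, and the $(1-t)$-quotient isomorphism from the first step matches their $\Z$-homology classes with the rim tori of part (iv).

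The main obstacle I anticipate is pinning down the freeness of $H_2(E_\Sigma;\Lambda)$ claimed in part (v). The handle-theoretic argument must simultaneously arrange (a) that there are no $4$-handles, using non-emptiness of $\partial E_\Sigma$; (b) that the $2$-handle attaching words are trivial in $\pi_1\cong\Z$, using freeness of the free group on one generator; and (c) that the $\Lambda$-boundary map $\partial_3$ has image splitting off as a free summand of $\Lambda^{c_2}$, so that $H_2(E_\Sigma;\Lambda)$ is genuinely free of rank $2g$ rather than merely projective or stably free of that rank. All other parts reduce to reasonably routine diagram-chasing once the correct duality identifications and exact sequences are in place.
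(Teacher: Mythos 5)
Your overall plan lines up closely with the paper's: parts (iii) and (iv) via normal-bundle triviality and Alexander duality, (i), (ii), (vi) by passing to the infinite cyclic cover and duality/UCT, and (v) by combining a rank count, Hurewicz plus genericity of immersions, and the map $H_2(E_\Sigma;\Lambda)\otimes_\Lambda\Z\to H_2(E_\Sigma;\Z)$ to hit rim tori. The main difference is one of sourcing: where the paper cites \cite[Lemma~3.2, Lemma~5.5, Lemma~5.10]{Conway-Powell} for (i), (ii), (vi), the freeness of $H_2(E_\Sigma;\Lambda)$, part (iii), and the isomorphism $H_2(E_\Sigma;\Lambda)\otimes_\Lambda\Z\cong H_2(E_\Sigma;\Z)$, you attempt direct proofs. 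Most of these are fine (your UCT argument for (vi) works since $\operatorname{Hom}_\Lambda$ of the relevant torsion modules vanishes, and your cover computation for (iii) is the standard argument; note however that the absence of $4$-handles and the vanishing of $\partial_2$ are consequences of $\partial E_\Sigma \neq \emptyset$ and $H_1(E_\Sigma;\Lambda)=0$ rather than anything to be ``arranged'').

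The genuine gap is exactly the one you flag, and it is not closed: the freeness of $H_2(E_\Sigma;\Lambda)$. Your handle-theoretic route has two problems. First, the step ``$E_\Sigma$ admits a handle decomposition with one $0$-handle and one $1$-handle'' is not automatic from $\pi_1\cong\Z$; reducing to a single $1$-handle in dimension $4$ requires cancelling surplus $1$-/$2$-handle pairs, which is the kind of move that is obstructed in general, and moreover $\Sigma$ is only assumed locally flat, so $E_\Sigma$ is a priori only a topological $4$-manifold. Second, even granting a chain complex of the form $0\to\Lambda^{c_3}\xrightarrow{\partial_3}\Lambda^{c_2}\xrightarrow{0}\Lambda\xrightarrow{1-t}\Lambda$, all you can read off is that $H_2(E_\Sigma;\Lambda)=\operatorname{coker}\partial_3$ has projective dimension $\le 1$; since $\Lambda=\Z[t,t^{-1}]$ is not a PID, this does not yield freeness, and ``$\partial_3$ has free cokernel'' is precisely what needs to be proved. (Your worry about stably free vs.~free is actually a non-issue, since $\widetilde K_0(\Z[\Z])=0$ and finitely generated projective $\Z[\Z]$-modules are free; the real gap is showing projectivity.) The paper sidesteps all of this by citing \cite[Lemma~3.2]{Conway-Powell}, where freeness is established; to make your proof self-contained you would need to supply that argument, for instance via Poincar\'e--Lefschetz duality identifying $H_2(E_\Sigma;\Lambda)$ with $\operatorname{Hom}_\Lambda(H_2(E_\Sigma,\partial E_\Sigma;\Lambda),\Lambda)$ and then invoking properties of reflexive modules over the two-dimensional regular ring $\Lambda$, or by some other homological-algebra input.
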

	
	\begin{proof}
		Part~\eqref{it:boundary-closed}
		is a special case of \cite[Lemma~5.5]{Conway-Powell}. 	Part~\eqref{it:H2} follows from Alexander duality.
All the other parts follow from \cite[Lemma~3.2]{Conway-Powell}, except for \eqref{it:H2-lanbda}, where some more argument is required, which we now give. In \cite[Lemma~3.2]{Conway-Powell}, it is shown that $H_2(E_{\Sigma};\Lambda)$ is a free $\Lambda$-module. That it has rank $2g$ follows from considering $\Q(t)$ coefficients: to see this, note that, by \eqref{it:H0-lambda}, \eqref{it:H1-lambda}, and \eqref{it:H3-lambda}, the Euler characteristic over $\Q(t)$ equals $\operatorname{rk} H_2(E_\Sigma;\Lambda)$, whereas computing over $\Q$ shows that $\chi(E_{\Sigma})=2g$.  Hence $\operatorname{rk} H_2(E_\Sigma;\Lambda)=2g$.

The elements of the standard basis $\{e_i\}_{i=1}^{2g}$ of $H_2(E_\Sigma;\Lambda) \cong \Lambda^{2g}$  are represented by generically immersed spheres (i.e.\ locally a flat embedding and all self-intersections are transverse double points) in the smooth 4-manifold $\Int(E_\Sigma)$ (it is smooth because it is an open subset of $S^4$), which we can see as follows. Apply the Hurewicz theorem to the universal cover of $E_\Sigma$ to see that $\pi_2(E_\Sigma) \cong H_2(E_\Sigma;\Lambda)$. Then apply the density of smooth immersions in $C^0(S^2, \Int(E_\Sigma))$~\cite[Theorems~2.2.6~and~2.2.12]{Hirsch-differential-topology}, and finally the density of generic immersions in the space of smooth immersions~\cite[Chapter~III, Corollary~3.3]{GoGu}. Since the $\{e_i\}_{i=1}^{2g}$ are represented by generically immersed spheres, so are  $\{e_i \otimes 1\}_{i=1}^{2g} \subseteq H_2(E_{\Sigma};\Lambda)\otimes_{\Lambda} \Z \cong  \Lambda^{2g} \otimes_{\Lambda} \Z$.

  It was shown in the proof of \cite[Lemma~5.10]{Conway-Powell} that the canonical map
  \[
  H_2(E_{\Sigma};\Lambda)\otimes_{\Lambda} \Z \xrightarrow{\cong} H_2(E_{\Sigma};\Z)
  \]
  is an isomorphism.  It follows that
  $H_2(E_{\Sigma};\Lambda) \to H_2(E_{\Sigma};\Z)$ is indeed surjective.  Moreover, rim tori generate the codomain $H_2(E_{\Sigma};\Z) \cong \Z^{2g}$ by \eqref{it:H2}.  The inverse images of these rim tori in $H_2(E_{\Sigma};\Lambda)\otimes_{\Lambda} \Z$ are linear combinations of the basis elements $\{e_i \otimes 1\}_{i=1}^{2g}$, which we can write as
  \[
  \sum_j a_{ij} (e_j \otimes 1) = \biggl(\sum_j a_{ij} e_j \biggr) \otimes 1
  \]
  for some $a_{ij} \in \Z$ such that the matrix $(a_{ij})_{i,j=1}^{2g}$ is invertible over $\Z$.
  Hence, the elements $\{e_i' := \sum_j a_{ij} e_j\}_{i=1}^{2g}$ generate $H_2(E_\Sigma;\Lambda)$, because $(a_{ij})_{i,j=1}^{2g}$ is also invertible over $\Lambda$. Furthermore, the $e_i'$ map to the rim tori.
  By taking $a_{ij}$ parallel copies of $e_j$ for $j \in \{1, \dots, 2g\}$, linearly ordering all these spheres in an arbitrary way, and connecting each sphere with the next using a tube~\cite[Chapter~11]{FNOP} around an embedded framed arc that is coherent with the orientations of the spheres, we obtain a generically immersed sphere in $\Int(E_\Sigma)$ representing $e_i'$.
\end{proof}

\begin{defn}
  Let $E$ be a compact, connected, oriented, and based 4-manifold with $\pi_1(E) \cong \Z$. We say that a closed, connected, and oriented surface $S \looparrowright E$ immersed in $E$, and equipped with a basing arc from $S$ to the basepoint of $E$,  is \emph{$\Z$-trivial} if the inclusion-induced map $\pi_1(S) \to \pi_1(E)$ is the trivial homomorphism.
\end{defn}

The next lemma encapsulates the strategy for the proofs of most of our theorems.

\begin{lem}\label{lemma:main-strategy}
	Let $\Sigma \subseteq S^4$ be a closed, connected, and oriented $\Z$-surface of genus $g$.  Let $S_1,\dots, S_{2g} \subseteq E_\Sigma$ be generically immersed $\Z$-trivial surfaces, and let $\lambda$ be the $\Lambda$-intersection form on $H_2(E_\Sigma;\Lambda)$. Suppose that
	\[
	A:= [\lambda([S_i], [S_j])]_{i,j \in \{1,\dots, 2g\}} = \bigoplus_{k=1}^g \begin{pmatrix}
		0 & 1-t \\
		1-t^{-1}  & d_k
	\end{pmatrix}
	\]
	for some $d_k \in \Lambda$. Then $\{[S_1], \dots, [S_{2g}]\}$ is a basis for $H_2(E_\Sigma;\Lambda)\cong \Lambda^{2g}$ and $\lambda$ is isometric to $\bigoplus_g \mathcal{H}_2$. Hence $\Sigma$ is topologically unknotted by \cref{thm:intersection-form}.
\end{lem}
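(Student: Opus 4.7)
The plan is to combine an elementary change of basis with a determinant computation. Within each $2\times 2$ block of $A$, the $\Lambda$-basis change $(e_1, e_2) \mapsto (e_1,\, e_2 - c_k e_1)$ transforms the $k$th block into $\mathcal{H}_2$, so $A$ is congruent to $\bigoplus_g \mathcal{H}_2$ as a Hermitian matrix over $\Lambda$. In particular $\det A = t^{-g}(1-t)^{2g}$ generates the ideal $\bigl((1-t)^{2g}\bigr)$ in $\Lambda$.

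The main task is to show that $\{[S_1], \ldots, [S_{2g}]\}$ is a $\Lambda$-basis of $H_2(E_\Sigma; \Lambda)\cong \Lambda^{2g}$. Let $B \in M_{2g}(\Lambda)$ be the matrix expressing the $[S_i]$ in a fixed basis of $H_2(E_\Sigma; \Lambda)$, and let $M$ be the matrix of $\lambda$ in that basis. Then $A = B M \bar{B}^T$, so $\det A = \det B \cdot \det M \cdot \overline{\det B}$. I will prove that $\det M$ also generates $\bigl((1-t)^{2g}\bigr)$; together with the computation of $\det A$, this forces $\det B \cdot \overline{\det B}$ to be a unit in $\Lambda$. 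Since $\det B$ divides this unit in the integral domain $\Lambda$, $\det B$ is itself a unit; hence $B$ is invertible and $\{[S_i]\}$ is a $\Lambda$-basis.

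The claim $(\det M) = \bigl((1-t)^{2g}\bigr)$ follows from identifying the cokernel of the adjoint $\hat\lambda \colon H_2(E_\Sigma; \Lambda) \to \overline{\operatorname{Hom}_\Lambda(H_2(E_\Sigma;\Lambda), \Lambda)}$. By Poincar\'{e}--Lefschetz duality and the universal coefficient theorem (with no Ext contribution because $H_1(E_\Sigma; \Lambda) = 0$ by \cref{prop:homology-closed}), $\hat\lambda$ is identified with the composition $H_2(E_\Sigma; \Lambda) \cong H^2(E_\Sigma, \partial E_\Sigma; \Lambda) \to H^2(E_\Sigma; \Lambda) \cong \overline{H_2(E_\Sigma;\Lambda)^*}$. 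A twisted (co)homology computation for $\partial E_\Sigma = \Sigma \times S^1$ yields $H^1(\partial E_\Sigma; \Lambda) \cong \Z$ and $H^2(\partial E_\Sigma; \Lambda) \cong (\Lambda/(1-t))^{2g}$. The long exact sequence of the pair $(E_\Sigma, \partial E_\Sigma)$ then shows that $\hat\lambda$ is injective (because the torsion group $H^1(\partial E_\Sigma; \Lambda)$ maps trivially to the torsion-free $H^2(E_\Sigma, \partial E_\Sigma; \Lambda) \cong \Lambda^{2g}$) and that $\operatorname{coker}(\hat\lambda) \cong H^2(\partial E_\Sigma; \Lambda)$ (because $H^3(E_\Sigma, \partial E_\Sigma; \Lambda) \cong H_1(E_\Sigma; \Lambda) = 0$). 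The $0$th Fitting ideal of this cokernel equals $\bigl((1-t)^{2g}\bigr)$, and therefore so does $(\det M)$.

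Once $\{[S_i]\}$ is known to be a basis, $\lambda$ has matrix $A$ in this basis and is hence isometric to $\bigoplus_g \mathcal{H}_2$ by the first paragraph; \cref{thm:intersection-form} then yields topological unknottedness of $\Sigma$. I expect the main obstacle to be the twisted cohomology computation for $\partial E_\Sigma$ and the long-exact-sequence bookkeeping used to identify $\operatorname{coker}(\hat\lambda)$; the change of basis and the passage from the determinant equality to the invertibility of $B$ are routine.
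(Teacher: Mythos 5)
Your proposal follows the same outline as the paper's proof: write $A = BM\overline{B}^T$, show that $\det M$ is a unit times $(1-t)^{2g}$, deduce that $\det B$ is a unit so $\{[S_i]\}$ is a basis, and then diagonalise block-by-block with the elementary matrices $\begin{pmatrix}1&0\\-c_k&1\end{pmatrix}$. The paper asserts the fact $\det M \doteq (1-t)^{2g}$ in one line, deducing it directly from \cref{prop:homology-closed}\eqref{it:boundary-closed}; you prove it carefully by identifying $\hat\lambda$ with the restriction $H^2(E_\Sigma,\partial E_\Sigma;\Lambda)\to H^2(E_\Sigma;\Lambda)$, showing that it is injective with cokernel $H^2(\partial E_\Sigma;\Lambda)\cong(\Lambda/(1-t))^{2g}$, and appealing to Fitting ideals. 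That argument is correct, and it is the standard duality proof behind the paper's assertion; the only small gap is in the universal coefficient step, where over $\Lambda=\Z[t,t^{-1}]$ (global dimension 2) the identification $H^2(E_\Sigma;\Lambda)\cong\overline{H_2(E_\Sigma;\Lambda)^*}$ requires not just $H_1(E_\Sigma;\Lambda)=0$ but also $\operatorname{Ext}^2_\Lambda(H_0(E_\Sigma;\Lambda),\Lambda)=\operatorname{Ext}^2_\Lambda(\Z,\Lambda)=0$, which holds by the length-one free resolution $0\to\Lambda\xrightarrow{1-t}\Lambda\to\Z\to 0$ but should be checked.
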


\begin{proof}
	Let $\langle e_1, \dots, e_{2g} \rangle$ be a  basis for $H_2(E_{\Sigma}; \Lambda) \cong \Lambda^{2g}$. Let $B$ be a matrix over $\Lambda$ such that
	\begin{equation}\label{eqn:B}
		([S_1], \dots, [S_{2g}]) = B \cdot (e_1, \dots, e_{2g}),
	\end{equation}
	i.e.\  $[S_i] = Be_i$ for $i \in \{1, \dots, 2g\}$.
	By part~\eqref{it:boundary-closed} of Proposition~\ref{prop:homology-closed}, the order of $H_1(\partial E_\Sigma; \Lambda)$ is $(1-t)^{2g}$, which, up to units in $\Lambda$, equals $(1-t)^g(1 - t^{-1})^g$. Hence, up to units,
	\begin{equation}\label{eqn:order}
		\det \left([\lambda(e_i, e_j)]_{i, j \in \{1,2\}} \right) \doteq (1-t)^g(1 - t^{-1})^g.
	\end{equation}
	By a computation, the hypothesis, equation~\eqref{eqn:B}, another computation, and finally using equation~\eqref{eqn:order},  we obtain that
	\[
	\begin{split}
		(-1)^g (1-t)^g(1 - t^{-1})^g
&=\det A \\
&=  \det \left([\lambda([S_i], [S_j])]_{i, j \in \{1,\dots,2g\}}\right) \\
&=  \det \left([\lambda(Be_i, Be_j)]_{i, j \in \{1,\dots,2g\}}\right)  \\
&=  \det(B)  \det \left([\lambda(e_i, e_j)]_{i, j \in \{1,\dots,2g\}} \right) \det \big( \overline{B^T} \big) \\
&\doteq \det(B) \overline{\det(B)} (1-t)^g(1 - t^{-1})^g.
	\end{split}
	\]
The last equality is again up to units, so we can ignore the $(-1)^g$ factor, and deduce that
\[(1-t)^g(1 - t^{-1})^g \doteq \det(B) \overline{\det(B)} (1-t)^g(1 - t^{-1})^g.\]
	Since $\Lambda$ is an integral domain, it follows that $\det(B) = \pm t^k$ for some $k \in \Z$, so $B$ is invertible over $\Lambda$, and hence $\{ Be_1, \dots, Be_{2g} \} = \{[S_1], \dots, [S_{2g}]\}$ is a basis, as claimed.
This means that the $\Lambda$-intersection form $\lambda$ of $E_{\Sigma}$ is represented by $A$.

Augmenting this matrix with $\varepsilon \colon \Lambda \to \Z$ gives the $\Z$-valued intersection form
\[
		\bigoplus_{k=1}^g \begin{pmatrix}
			0 & 0 \\
			0  & \varepsilon(d_k)
		\end{pmatrix}
\]
of $E_{\Sigma}$.
By Proposition~\ref{prop:homology-closed}, $H_2(E_\Sigma;\Z)$ is generated by rim tori, which lie in the image of $H_2(\partial E_\Sigma;\Z)$, and so all intersection numbers involving them vanish. It follows that $\varepsilon(d_k) =0$ for all $k$.

Now fix $k \in \{1, \dots,g\}$.
Since $\lambda$ is Hermitian,
\[d_k= a_0 + a_1(t+t^{-1}) + \cdots + a_n(t^n + t^{-n})\]
for some $n \in \mathbb{N}_0$ and for some integers $a_0,\dots,a_n$.
Since $\varepsilon(d_k)=0$, we deduce that \[a_0 +2a_1 + \cdots + 2a_n=0.\]
For any $p(t) \in \Lambda$,  we can add $(1-t)p(t) + (1-t^{-1})p(t^{-1})$ to $d_k$ while keeping the other three entries of the matrix
\[
	\begin{pmatrix}
		0 & 1-t \\
		1-t^{-1}  & d_k
	\end{pmatrix}
\]
unchanged by the change of basis $\{[S_{2i}], [S_{2i+1}]\} \mapsto \{[S_{2i}], [S_{2i+1}] + p(t) [S_{2i}]\}$ of $H_2(E_\Sigma;\Lambda)$.
Applying this with $p(t) = a_nt^{n-1}$, we replace $d_k$ with
\[a_0 + a_1(t+t^{-1}) + \cdots + (a_{n-1} + a_n)\bigl(t^{n-1} + t^{-(n-1)}\bigr).\]
Iterate this change, at each stage removing the terms whose exponents have the highest absolute value, to obtain
\[a_0 + (a_1 + \cdots + a_n)(t+t^{-1}).\]
One more basis change, with $p(t) = (a_1 + \cdots + a_n)t$, yields $a_0 +2a_1 + \cdots + 2a_n =0$.
Thus, by a basis change, we can replace $d_k$ with $0$.

Performing these basis changes for each $k = 1, \dots,g$,  we find that $\lambda$, the $\Lambda$-intersection form of $E_{\Sigma}$, is isometric to $\cH_2$.  It follows  by Theorem~\ref{thm:intersection-form} that $\Sigma$ is topologically unknotted.
\end{proof}

In each of Sections~\ref{section:proof-turned-tori-unknotted}, \ref{sec:Cochran-Davis-examples}, and \ref{section:four-crit-points}, we will exhibit $\Z$-trivial surfaces that generate a submodule with $\Lambda$-intersection form represented by $A$, and then we will appeal to \cref{lemma:main-strategy}.

It will sometimes be helpful to compute the homological self-intersection numbers $\lambda([S_i],[S_i]) \in \Lambda$ in Lemma~\ref{lemma:main-strategy} using the Wall self-intersection number $\mu(S_i)$, so we recall this next; see~\cite[Chapter~5]{Wall-surgery-book}.
Let $S \subseteq E_\Sigma$ be a $\Z$-trivial surface that is the image of a generic immersion into $E_{\Sigma}$.
For each double point $p \in S$, we obtain a sign $\varepsilon_p$ and a double point loop $\gamma_p$ in $S$, with the latter well-defined up to orientation and homotopy in $E_\Sigma$; see e.g.~\cite[Section~11.3]{disk-embedding}.  Thus each double point gives rise to a monomial $\pm t^k \in \Lambda/\langle t^r \sim t^{-r}  :  r \in \Z\rangle$. To define the target, consider $\Lambda$ as a free abelian group with basis $\{t^r  :  r \in \Z\}$, and take the quotient abelian group where $t^r$ and $t^{-r}$ are identified for all~$r$.  Taking the sum of contributions from all the double points~$p$ gives
\[\mu(S) := \sum_p \varepsilon_p \gamma_p \in \Lambda/\langle t^r \sim t^{-r}  :  r \in \Z\rangle.\]
Let $e(S) \in \Z$ be the Euler number of the normal bundle of $S$, and let $\iota \colon \Z \to \Lambda$ be the unique such ring homomorphism. Then we have the useful formula
\begin{equation}\label{eqn-lambda-mu}
  \lambda([S],[S]) = \mu(S) + \overline{\mu(S)}  + \iota(e(S));
\end{equation}
see \cite[Theorem~5.2]{Wall-surgery-book}.
Here, we must fix a representative of $\mu(S)$ in $\Lambda$, and note  that the sum $\mu(S) + \overline{\mu(S)} \in \Lambda$ is independent of the choice of representative.
The next lemma will be useful for computing intersections numbers.

\begin{lem}\label{lem:computing-intersections}
  Let $\Sigma \subseteq S^4$ be a $\Z$-surface, and let $T \subseteq E_\Sigma$ be a rim torus. Let $D \subseteq E_{\Sigma}$ be a $\Z$-trivial immersed surface with one boundary component that is a longitude of $T$. Suppose that the interior of $D$ intersects $T$ transversely, and  suppose that a section of $\nu_{\partial D \subseteq T}$ extends to a nonvanishing section of $\nu_{D \subseteq E_{\Sigma}}$.
  Let $S$ denote the result of surgering $T$ along $D$.
  Let $G \subseteq E_{\Sigma}$ be an immersed $\Z$-trivial surface disjoint from $T$ and intersecting $D$ transversely in precisely one point, i.e.\  $|D \pitchfork G|=1$.
  \begin{enumerate}[font=\normalfont, label = (\roman*), ref= \roman*]
    \item\label{it:comp-intersections-i} Every intersection point of $\Int(D)$ with $T$ contributes $\pm t^k(1-t)$ to $\mu(S)$, for some sign and some $k \in \Z$.
    \item\label{it:comp-intersections-iii}
    Possibly after changing the orientation and basing arc of $G$, we have that $\lambda([S],[G]) = 1-t$.
  \end{enumerate}
\end{lem}

\begin{proof}
The surgery cuts $T$ open along $\partial D$ and glues in two parallel copies of $D$. Since $D$ is embedded, and we use the section of  $\nu_{D \subseteq E_{\Sigma}}$ from the statement, we obtain a surface with trivial normal bundle.

We first prove part~\eqref{it:comp-intersections-i}. Consider an intersection point $p \in \Int(D) \pitchfork T$. This gives rise to two double points $p_+$ and $p_-$ of $S$, which lie at either end of an arc $D^1 \subseteq T$ arising from the two parallel copies $D_+$ and $D_-$ of $D$.
Since $D_+$ and $D_-$ have opposite orientations induced on them from an orientation of $S$, the signs of the double points $p_+$ and $p_-$ differ. We can compare the corresponding group elements by taking a path $\gamma$ embedded in $D$, with $\gamma(0) = p$ and $\gamma(1) \in \partial D \subseteq T$. The path $\gamma$ gives rise to $\gamma \times D^1$, with $\gamma \times S^0 \subseteq D_+ \cup D_-$. Let $M$ be a meridional disc of $\Sigma$ with boundary on $T$, such that $\gamma(1) \times D^1$ is an arc of $\partial M$. Then $\delta := (\gamma \times D^1) \cup M \subseteq S^4$ is a disc with
\[
\partial \delta = (\gamma(0) \times D^1) \cup  (\gamma \times S^0)  \cup  \bigl(\partial M \setminus (\gamma(1) \times D^1) \bigr);
\]
see \cref{fig:computingints-1}.
The homotopy class of $\partial \delta$ in $\pi_1(E_\Sigma) \cong \Z$ measures the difference between the double point loops of the two double points. Since $\delta$ intersects $\Sigma$ exactly once, the difference in the contributions to $\mu(S)$ is $t^{\pm 1}$.  So, up to multiplication by~$\pm t^k$, we have a contribution $1-t$ or $1-t^{-1}$. But these are equal in $\Lambda/\langle t^r \sim t^{-r}  :  r \in \Z\rangle$, so this completes the proof of part~\eqref{it:comp-intersections-i}.

\begin{figure}
  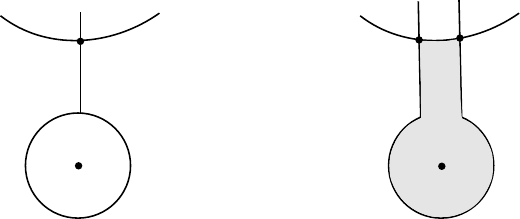
 \caption{Left: a schematic showing a meridian of the rim torus $T$, a portion of the surface $D$, and another sheet of $T$ intersecting $D$. Right: after surgery, with the disc $\delta$ that is used to compute the difference in the double point loops of the two intersection points in $\Int(D) \pitchfork T$ shown. }
 \label{fig:computingints-1}
  \end{figure}

  The proof of part~\eqref{it:comp-intersections-iii} is analogous to the proof of part~\eqref{it:comp-intersections-i}. The intersection point in $D \pitchfork G$ gives rise to two intersection points between $S$ and $G$. By the same argument as in the proof of part~\eqref{it:comp-intersections-i}, they contribute  $1-t$ or $1-t^{-1}$, up to multiplication by~$\pm t^k$. But $-t(1-t^{-1}) = 1-t$, so, up to units, there is again just one case.  By changing the basing arc of $G$ and its orientation, we can remove the up-to-units indeterminacy $\pm t^k$.
\end{proof}

The topological unknotting conjecture for genus $g \in \{1,2\}$ is implied by the following algebraic conjecture, which we advertise here.
We say that a hermitian, sesquilinear form on a free module over~$\Z[\Z]$ is \emph{surface hyperbolic} if it is isometric to $\cH_2^{\oplus g}$ for some $g$.
Such a form $(P,\lambda)$ is \emph{stably surface hyperbolic} if there is an isometry $(P,\lambda) \oplus \cH_2^{\oplus k} \cong \cH_2^{\oplus g}$ for some $k$, $g$.

\begin{conj}
Let $(P,\lambda)$ be a hermitian, sesquilinear form on a free $\Z[\Z]$-module of rank $2g$, where~$g \in \{1,2\}$, that is stably surface hyperbolic. Then $(P,\lambda)$ is surface hyperbolic.
\end{conj}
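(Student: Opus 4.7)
The plan is to prove cancellation of $\cH_2$ for hermitian sesquilinear forms over $\Lambda=\Z[t,t^{-1}]$ in the low-rank regime of the conjecture. An immediate induction on $k$ reduces the statement to single-stabilization cancellation: if $(P,\lambda)\oplus\cH_2\cong\cH_2^{\oplus(g+1)}$ with $g\in\{1,2\}$, then $(P,\lambda)\cong\cH_2^{\oplus g}$.

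The first step is to pass to the Blanchfield linking form on the torsion $\Lambda$-module $T:=\operatorname{coker}(\lambda^\sharp\colon P\to P^\ast)$, with values in $\Q(t)/\Lambda$. This is a natural isometry invariant, additive under orthogonal sum. A direct computation for $\cH_2$ shows that its cokernel is $\Lambda/(1-t)\oplus\Lambda/(1-t^{-1})$ and that the induced Blanchfield form is the standard hyperbolic linking pairing on this module. Because linking forms on finitely generated torsion $\Lambda$-modules admit Krull--Schmidt-type decompositions, cancellation holds at the linking-form level, so the Blanchfield form of $(P,\lambda)$ must be isometric to $g$ copies of that of $\cH_2$.

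The main obstacle is then to upgrade this equality of Blanchfield forms to an isometry of the ambient hermitian forms. The strategy would be to use the isotropic classes of the Blanchfield form to locate a primitive $v\in P$ with $\lambda(v,v)=0$ and a partner $w\in P$ with $\lambda(v,w)=1-t$, yielding an orthogonal splitting $(P,\lambda)\cong\cH_2\oplus(P',\lambda')$, and then to induct on the rank. The difficulty is that, over the two-dimensional ring $\Lambda$, a non-unimodular hermitian form need not be determined by its Blanchfield pairing, and existence of the sought-after primitive isotropic vector is essentially the cancellation problem itself. One promising route is to exploit that $\Lambda$ has stable rank two, combined with a Bass--Vaserstein-style unitary stability argument, using the stabilized isometry provided by the hypothesis to produce the isotropic vector in $P$ after elementary modifications.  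The rank restriction $g\in\{1,2\}$ appears to be precisely the threshold at which this approach would succeed or fail, which is consistent with these being the only cases still at stake in light of \cite{Conway-Powell}.
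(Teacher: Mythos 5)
You should note at the outset that the statement you were asked to prove is labelled a \emph{conjecture} in the paper, not a theorem. The paper does not prove it; it explicitly records it as open and remarks only that its analogue for $g \geq 3$ was established in \cite{Conway-Powell}. So there is no ``paper proof'' to compare against, and any correct argument here would be a substantial new result.

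Your proposal does not close the gap, and you are commendably honest about where it breaks. Two specific steps are unjustified. First, the claim that ``linking forms on finitely generated torsion $\Lambda$-modules admit Krull--Schmidt-type decompositions, so cancellation holds at the linking-form level'' is asserted rather than proved; over $\Lambda = \Z[t,t^{-1}]$, which is a two-dimensional regular ring, Krull--Schmidt can fail for modules, and the passage from module-level uniqueness to isometry-level cancellation of linking forms is an additional nontrivial step. In the present case the cokernel of $\lambda^\sharp$ for $\cH_2$ is $(\Lambda/(1-t))^{\oplus 2}$ (note $(1-t)$ and $(1-t^{-1})$ generate the same ideal), which as a $\Lambda$-module is $\Z^2$ with trivial $t$-action; this is a $\Lambda$-torsion module that is not $\Z$-finite, so the usual devissage tools for Blanchfield forms over one-variable knot exteriors (which rely on finiteness) do not apply directly, and in any case the Blanchfield form is a weaker invariant than the hermitian form for non-unimodular pairings. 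Second, and more to the point, the step you flag as the ``main obstacle''---producing a primitive isotropic $v \in P$ with a hyperbolic partner, using stable-range/unitary-stability techniques---is exactly the argument that establishes cancellation for $g \geq 3$ in \cite{Conway-Powell}: Bass--Vaserstein-style hermitian cancellation over $\Lambda$ kicks in only above a rank threshold determined by the (unitary) stable rank of $\Lambda$, and ranks $2$ and $4$ sit below that threshold. So your proposed route does not get new traction in the low-rank regime; it reproduces the known high-rank proof, whose failure for $g \in \{1,2\}$ is precisely why the conjecture remains open. In short, the proposal identifies the correct structure of the problem but contains no new idea that would resolve it, and both of its load-bearing claims (Blanchfield-level cancellation, and the stability argument applying at rank $\leq 4$) are unsupported or false as stated.
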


An important part of \cite{Conway-Powell} was to prove the analogue of this conjecture for $g \geq 3$.

\section{Turned twist-spun tori}\label{sec:twist-spun-tori}
	
	In this section, we review Boyle's construction \cite[Definition~5.1]{Boyle}.
	Let $K$ be a knot in $D^3$ such that $K$ intersects $S^2_{1/2}$ in $\{(0,0,1/2), (0,0,-1/2)\}$ transversely, and such that $K \setminus B^3_{1/2}$ is a boundary-parallel arc in $D^3 \setminus B^3_{1/2}$ with endpoints on $S^2_{1/2}$. Furthermore, let $i$ and $j$ be integers.
	
	Let $R_\theta$ be rotation of $\R^3$ about the $z$-axis by angle $\theta$. Fix $\varepsilon \in (0,1/2)$.
	Choose an isotopy $\{\Phi_t  :  t \in I\}$ of $D^3$ such that $\Phi_t|_{S^2_r} = \Id_{S^2_r}$ for all $r \in (1-\varepsilon, 1]$, and
	\[
	\Phi_t|_{D^3_{1/2}} = R_{2\pi j t} |_{D^3_{1/2}}
	\]
	for every $t \in I$. Write $\phi_j := \Phi_1$.
	
	We define the diffeomorphism $\eta_i \colon S^2 \times S^1 \to S^2 \times S^1$ via
	\[
	\eta_i|_{S^2 \times \{\theta\}} := R_{i \theta}|_{S^2} \times \Id_{\{\theta\}}
	\]
	for each $\theta \in S^1$. Consider the decomposition
	\[
	S^4 \cong \partial D^5 \cong \partial (D^3 \times D^2) \cong  (D^3 \times \partial D^2) \cup (\partial D^3 \times D^2).
	\]
	Then the 4-manifold
	\[
	(D^3 \times \partial D^2) \cup_{\eta_i} (\partial D^3 \times D^2)
	\]
	is obtained from $S^4$ via a Gluck twist along the standard 2-sphere $S_0 := \partial D^3 \times \{0\}$, and is hence diffeomorphic to $S^4$.  On the other hand, if $i$ is even, then $\eta_i$ is isotopic to $\Id_{S^2 \times S^1}$,  and so we again obtain the standard $S^4$.
	
	\begin{defn}\label{def:torus}
		The \emph{$i$-turned $j$-twist-spun torus of $K$} is the pair $(S^4, T_{K,i,j})$ given by
		\[
		\frac{(D^3, K) \times I}{\{(x,0) = (\phi_j(x), 1)  :  x \in D^3\}} \cup_{\eta_i} (S^2 \times D^2),
		\]
		where $S^4 \cong (D^3 \times \partial D^2) \cup_{\eta_i} (\partial D^3 \times D^2)$ and $T_{K,i,j} := K \times I$.
	\end{defn}
	
	
	Intuitively, we can describe $T_{K,i,j}$ as follows. Let $K \subseteq \R^3_{>0}$ be a knot. Choose a 3-disc $D \subseteq \R^3_{>0}$ such that $\partial D$ intersects $K$ transversely in a pair of antipodal points and $K \setminus D$ is an unknotted arc. Then we obtain $T_{K,i,j}$ in $\R^4 \subseteq S^4$ by rotating $\R^3_+$ around $\R^2 = \partial \R^3_+$, and while doing so, rotating $K \cap D$ around the line through $K \cap \partial D$ and simultaneously rotating $K$ around the $z$-axis.
	
	Boyle showed that the smooth isotopy class of $T_{K,i,j}$ only depends on $i$ through the parity of $i$. Indeed, the group of automorphisms of $S^2 \times S^1$ modulo ones that  extend to $S^2 \times D^2$ is $\Z/2$, generated by~$\eta_1$.
Boyle calls $T_{K,0,i}$ the \emph{$j$-twist-spun torus of $K$} and $T_{K,1,j}$ the \emph{turned $j$-twist-spun torus of $K$.}
	
	Boyle also showed that $\pi_1(S^4 \setminus T_{K,i,j}) \cong G / [G, \mu^j]$, where $G \cong \pi_1(S^3 \setminus K)$ and $\mu \in G$ is the class of a meridian of $K$. Since $\mu$ normally generates $G$, we have $[G, \mu] \cong [G, G]$. This isomorphism can be seen using the commutator identities $[g,\mu^x] = \bigl[g^{x^{-1}},\mu \bigr]^x$ and $[g,\mu^x\mu^y]= [g,\mu^x][g,\mu^y]^{\mu^x}$, where superscripts indicate conjugation. Hence
	\[
	\pi_1(S^4 \setminus T_{K,i,1}) \cong G / [G, G] \cong H_1(S^3 \setminus K) \cong \Z.
	\]
	In fact, $T_{K,0,1}$ is smoothly isotopic to $T^2$, since it is the 1-twist-spun 2-knot obtained from $K$, which is the unknot by Zeeman~\cite{Zeeman-twisting}, with a trivial tube attached. However, Boyle could not determine whether $T_{K,1,1}$ was smoothly equivalent to $T^2$, and this question is still open. Note that one can obtain~$T_{K,1,1}$ from~$T_{K,0,1}$ via a Gluck twist along the trivial 2-knot $S_0$ that links $T_{K,0,1}$ nontrivially.
	
	\section{Every turned 1-twist-spun torus is topologically unknotted}\label{section:proof-turned-tori-unknotted}
	
	In this section, we study turned 1-twist-spun tori up to topological isotopy. Our main result is the following, restated from the introduction.
	
	\Boyle*
	
	\begin{proof}
		By \cref{lemma:main-strategy}, it suffices to find $\Z$-trivial surfaces in
\[E := E_{T_{K,1,1}} = S^4 \setminus N(T_{K,1,1}),\]
with respect to which the $\Lambda$-intersection form $\lambda$ of $E$ is represented by a matrix of the form $A$ as in \cref{lemma:main-strategy}.
		Throughout, we use the notation of Definition~\ref{def:torus}.
		Consider the sphere
		\[
		S_0 := \partial D^3 \times \{0\} \subseteq S^4
		\]
		disjoint from $K$. This has self-intersection $0$.
		
		Isotope the arc $K \setminus B^3_{1/2}$ relative to its boundary such that its interior only intersects the $z$-axis in $\{(0,0,3/4)\}$, where it is perpendicular to it. Let $\mu$ be a meridian of $K$ at $(0,0,3/4)$ of radius $1/8$.  Then
		\[
		T_{\ell} := \frac{\mu \times I}{\{(x,0) = (\phi_1(x), 1)  :  x \in \mu\}} \subseteq E
		\]
		is a rim torus of $T_{K,1,1}$ over a longitude $\ell$ of $T_{K,1,1}$.
		
		Since $T_{\ell}$ is not a $\Z$-trivial surface, we surger it to a sphere. For this end, consider the arc
		\[
		J := \{0,0\} \times [7/8, 1] \subseteq D^3.
		\]
		Then
		\begin{equation} \label{eqn:disc}
		\begin{split}
			D &:= \frac{J \times I}{\{(x,0) = (\phi_1(x), 1)  :  x \in J\}} \cup_{\eta_1} (\{(0,0,1)\} \times D^2) \\
			&\subseteq \frac{D^3 \times I}{\{(x,0) = (\phi_1(x), 1)  : x \in D^3]\}} \cup_{\eta_1} (S^2 \times D^2)
		\end{split}
		\end{equation}
		is a disc in $S^4$ with boundary on $T_{\ell}$.
		
		Since $\eta_1$ applies a $2\pi$ rotation about the $z$-axis to $\mu$, a trivialisation of the normal bundle $\nu_{\partial D \subseteq T_{\ell}}$ does not extend to a section of the normal bundle $\nu_{D \subseteq E}$. The tangential framing of $\nu_{\partial D \subseteq T_{\ell}}$, transported to
\[ \frac{\partial D^3 \times I}{\{(x,0) = (\phi_1(x), 1)  :  x \in \partial D^3\}} =  \frac{\partial D^3 \times I}{\{(x,0) = (x, 1)  :  x \in \partial D^3\}} \cong S^2 \times S^1,\]
is a constant vector \[\underline{v} \times \{\underline{0}\} \in T_{(0,0,1)}S^2 \times T_{\theta}S^1 \subseteq TS^2 \times TS^1 \cong T(S^2 \times S^1).\]
A nonvanishing section of the normal bundle of $\{(0,0,1)\} \times D^2$ in $S^2 \times D^2$ also gives rise to a constant vector, in the coordinates of $S^2 \times D^2$. However, the identification $\eta_1$ is a Gluck twist, so introduces a full rotation of difference between the two. Hence $D$ is not compatibly framed with respect to the tangential framing of $\partial D$.

		However, if we perform a boundary twist on $D$ along the curve $\partial D = \{(0,0,7/8)\} \times I$, then we can undo the twist. The resulting disc $D_1$ intersects $T_{\ell}$ once, but now a trivialisation of $\nu_{\partial D_1 \subseteq T_{\ell}}$ does extend to $\nu_{D_1 \subseteq E}$.
		
		If we surger $T_{\ell}$ along $D_1$, we obtain an immersed sphere $S_1$ with two double points, and we claim that $\mu([S_1]) = \pm(1-t) \in \Lambda/\langle t^r \sim t^{-r}  :  r \in \Z\rangle$.
We have that $D_1$ is embedded and framed, and intersects $T$ in one point, so, by part~\eqref{it:comp-intersections-i} of \cref{lem:computing-intersections}, we have that $\mu(S_1) = \pm t^k (1-t)$ for some $k$.
Since the double points arose from a boundary twist, it follows that one of the double points is local, i.e.\ the entire double point loop is contained in a copy of $D^4 \subseteq E_\Sigma$. Hence, this double point loop is trivial. We deduce that $\mu(S_1) = \pm(1-t)$ or $\pm (1-t^{-1})$. Given the indeterminacy in $\mu(S_1)$, this completes the proof of the claim that $\mu(S_1) =\pm(1-t)$.

 Furthermore, thanks to the boundary twist, $S_1$ has trivial normal bundle, hence $e(S_1)=0$ and so, by equation~\eqref{eqn-lambda-mu}, we have that
		\[
		\lambda([S_1], [S_1]) = \mu([S_1]) + \overline{\mu([S_1])} = \pm (1-t) \pm (1-t^{-1}).
		\]
		As $|S_0 \pitchfork D_1| = 1$, by part~\eqref{it:comp-intersections-iii} of \cref{lem:computing-intersections}, we have that $\lambda([S_0],[S_1]) = 1-t$, after possibly changing the orientation of $S_0$ and its basing arc.
Since $\lambda$ is Hermitian, $\lambda([S_1],[S_0]) = 1-t^{-1}$.
Hence, the matrix of $\lambda$ on the submodule generated by the classes $[S_0]$ and $[S_1]$ is
		\[
		\begin{pmatrix}
			0 & 1-t \\
			1-t^{-1} & \pm (1-t) \pm (1-t^{-1})
		\end{pmatrix}.
		\]
		It follows from \cref{lemma:main-strategy}, with $g=1$,
that $([S_0], [S_1])$ is indeed a basis of $H_2(E; \Lambda) \cong \Lambda^2$, that the equivariant intersection form $\lambda$ is isometric to $\cH_2$, and that $\Sigma$ is topologically unknotted, as desired.
	\end{proof}

\begin{rem}
	An alternative strategy to proving Theorem~\ref{thm:Boyle} is to observe that the 1-twist-spun torus $T_{K,1,1}$ can be obtained from the standard torus $T^2$ in $S^4$ by performing 1-twist rim surgery along the curve with slope~1, and then applying \cite[Theorem~1.7]{Conway-Powell}.
\end{rem}

\section{Knot surgery}
\label{sec:knot-surgery}

In this section, we extend the results of Kim and Ruberman~\cite{Kim-Ruberman} to $\Z$-surfaces in $S^4$.
Let $\Sigma \subseteq S^4$ be a smoothly embedded, topologically unknotted, oriented surface of genus $g$. Furthermore, let $T \subseteq S^4 \setminus N(\Sigma)$ be a smoothly embedded torus that is unknotted in $S^4$.
If $J$ is a knot in $S^3$ with exterior $E_J$ and $t$, $r \in \Z$, then we let
\[
S^4_{t, r}(T, J) := (S^4 \setminus N(T)) \cup_f (E_J \times S^1)
\]
be the result of \emph{$t$-twist $r$-roll knot surgery along $T$ with pattern $J$}, where we describe the glueing map $f$ next.

Fix a parametrisation $g \colon S^1 \times S^1 \to T$, and write $\ell := g(S^1 \times \{1\})$ and $m := g(\{1\} \times S^1)$. Let $s \colon T \to \partial \overline{N(T)}$ be a section such that $s(\ell)$ and $s(m)$ are trivial in $H_1(S^4 \setminus N(T)) \cong \Z$. If $\mu_T \subseteq \partial \overline{N(T)}$ is a meridian of $T$, then
\[
\bigl\{[s(\ell)], [s(m)], [\mu_T]\bigr\}
\]
is a basis of $H_1 \bigl(\partial \overline{N(T)}\bigr) \cong H_1(T^3) \cong \Z^3$.

Let $\mu_J \subseteq \partial E_J$ be a meridian and let $\lambda_J \subseteq \partial E_J$ be a 0-framed longitude of $J$, and write $\mu_J \cap \lambda_J = \{b\}$. Then
\[
\bigl\{[\{b\} \times S^1], [\mu_J \times \{1\}], [\lambda_J \times \{1\}]\bigr\}
\]
is a basis of $H_1(\partial(E_J \times S^1)) = H_1(\partial E_J \times S^1) \cong H_1(T^3) \cong \Z^3$.

Note that $\pi_0 \bigl(\text{Diff}^+(T^3) \bigr) \cong \text{SL}(3, \Z)$, with the isomorphism given by taking the induced map on $H_1(T^3)$, by Waldhausen~\cite{Waldhausen}. Hence, we can define the map $f \colon \partial \overline{N(T)} \to \partial E_J \times S^1$ up to isotopy by requiring that
\[
f(s(\ell)) = \{b\} \times S^1 + t (\mu_J \times \{1\}) + r (\lambda_J \times \{1\}) \text{, } f(s(m)) = \mu_J \times \{1\} \text{, and } f(\mu_T) = \lambda_J \times \{1\}.
\]
Since $T$ is smoothly unknotted in $S^4$, there is a diffeomorphism
\[
\Theta \colon S^4_{t, r}(T, J) \to S^4;
\]
see Kim--Ruberman~\cite{Kim-Ruberman}. We write
\[
\Sigma_{t, r}(T, J) := \Theta(\Sigma).
\]
When $T$ is a rim torus of $\Sigma$ and $m$ is a meridian of $\Sigma$, then this is known as the $t$-twist $r$-roll rim surgery of $\Sigma$ with pattern $J$, but in our application $T$ will not be a rim torus. We now restate our main result on knot surgeries in surface complements from the introduction.

\knotsurgery*

\begin{proof}
Our discussion below works for arbitrary $g$, but it follows from the work of Conway and the second author~\cite{Conway-Powell} that $\Sigma_{t, r}(T, J)$ is topologically standard if it is a $\Z$-surface and $g \not\in \{1, 2\}$. The strategy is as follows. 	Let $E :=S^4 \setminus N(\Sigma_{t,r}(T, J))$ and $E_\Sigma := S^4 \setminus N(\Sigma)$.  We describe generators for $H_2(E_{\Sigma};\Lambda)$, and then modify them during the knot surgery construction to obtain generators for $H_2(E;\Lambda)$. We check that the intersection form is unchanged.

Now we begin to implement this. Since $\Sigma$ is topologically unknotted, there is a locally flat 3-manifold $Y \subseteq S^4$ homeomorphic to $S^3$ such that $\Sigma \subseteq Y$ and the closures $H_\alpha$ and $H_\beta$ of the components of $Y \setminus \Sigma$ are handlebodies.
Let~$\alphas = \{\alpha_1, \dots, \alpha_g\}$ and $\betas = \{\beta_1, \dots, \beta_g\}$ be collections of pairwise disjoint simple closed curves on~$\Sigma$ such that $\alphas \cup \betas$ represents a basis of~$H_1(\Sigma)$, and $\alpha_i$ bounds a locally flat disc $D_{\alpha_i} \subseteq H_\alpha$ and $\beta_i$ bounds a locally flat disc $D_{\beta_i} \subseteq H_\beta$ for $i \in \{1, \dots, g\}$. Furthermore, we can assume that $|\alpha_i \pitchfork \beta_j| = \delta_{ij}$ and the interiors of the discs $D_\gamma$ for $\gamma \in \alphas \cup \betas$ are pairwise disjoint.

Let $R_\gamma \subseteq S^4 \setminus \Sigma$ be a rim torus around $\gamma$ for $\gamma \in \alphas \cup \betas$, which is the boundary of the restriction $N_\gamma$ of the unit normal disc bundle of $\Sigma$ to $\gamma$. Let $S_\gamma \subseteq S^4 \setminus \Sigma$ be the topologically embedded 2-sphere obtained by surgering $R_\gamma$ along the topologically embedded disc $D_\gamma' := D_\gamma \setminus \Int(N_\gamma)$.
We can arrange that $D_{\alpha_i}' \cap R_\gamma = \emptyset$ and $D_{\beta_i}' \cap R_\gamma = \emptyset$ for any $i \in \{1, \dots, g\}$ and $\gamma \in (\alphas \cup \betas) \setminus \{\alpha_i, \beta_i\}$.
It follows that \begin{equation}\label{eqn:disjoint-surfaces-Ss} S_{\alpha_i} \cap S_{\alpha_j} = S_{\beta_i} \cap S_{\beta_j} = S_{\alpha_i} \cap S_{\beta_j}= \emptyset\end{equation}
for every $i$, $j \in \{1, \dots, g\}$ with  $i \neq j$. All the $S_{\gamma}$ are topologically embedded spheres in $S^4$, so have trivial normal bundle. Thus their self-intersection numbers all vanish, i.e.\ $\lambda_{E_{\Sigma}}([S_\gamma],[S_\gamma])=0$.
The collection $\{[S_{\alpha_1}], [S_{\beta_1}], \dots, [S_{\alpha_g}], [S_{\beta_g}] \}$ is a basis of $H_2(S^4 \setminus N(\Sigma); \Lambda) \cong \Lambda^{2g}$, and in this basis the $\Lambda$-intersection form $\lambda_{S^4 \setminus N(\Sigma)} \cong \bigoplus_g \cH_2$; see Section~\ref{section:std-int-form-implies-unknotted}.

\begin{lem}\label{lem:pi1}
	Suppose that $s(m)$ is 0-homotopic in $S^4 \setminus (\Sigma \cup T)$. Then $\Sigma_{t,r}(T, J)$ is a $\Z$-surface.
\end{lem}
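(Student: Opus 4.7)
The plan is to compute $\pi_1\bigl( E_{\Sigma_{t,r}(T,J)} \bigr)$ directly via Seifert--van Kampen on the natural decomposition
\[ E_{\Sigma_{t,r}(T,J)} \simeq X \cup_f (E_J \times S^1), \qquad X := S^4 \sm (N(\Sigma) \cup N(T)), \]
where the two pieces are glued along $T^3 \cong \partial N(T)$. Because $T$ is smoothly unknotted, the diffeomorphism $\Theta \colon S^4_{t,r}(T,J) \to S^4$ that identifies the surgered manifold with $S^4$ may be arranged to be the identity on a neighbourhood of $\Sigma$, and so it carries meridians of $\Sigma$ to meridians of $\Sigma_{t,r}(T,J)$. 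Accordingly, it suffices to show that the $\pi_1$ of the glued space is $\Z$, generated by a meridian of $\Sigma$ sitting inside $X$.

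The gluing map $f$ supplies three relations in the amalgamated product $\pi_1(X) *_{\pi_1(T^3)} \pi_1(E_J \times S^1)$, one for each generator of $\pi_1(T^3)$:
\[ s(\ell) = [\{b\} \times S^1] \mu_J^t \lambda_J^r, \qquad s(m) = \mu_J, \qquad \mu_T = \lambda_J. \]
The hypothesis that $s(m)$ is $0$-homotopic in $X$ combined with the middle relation forces $\mu_J = 1$ in the amalgamated product. As $\mu_J$ normally generates $\pi_1(E_J)$, the entire knot-group factor collapses, so $\lambda_J = 1$, and hence $\mu_T = 1$ by the third relation; moreover, $\pi_1(E_J \times S^1)$ reduces to the $\Z$ generated by $\{b\} \times S^1$, which the first relation identifies with $s(\ell) \in \pi_1(X)$. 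The net outcome is the isomorphism
\[ \pi_1\bigl( E_{\Sigma_{t,r}(T,J)} \bigr) \;\cong\; \pi_1(X) \big/ \langle\!\langle s(m), \mu_T \rangle\!\rangle. \]

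To finish, I would refill $N(T) \cong T^2 \times D^2$ into $X$ and observe that, since $\pi_1(T^2 \times D^2) \cong \Z^2$ is generated by the images of $s(\ell)$ and $s(m)$, this refilling only imposes the relation $\mu_T = 1$. Hence $\pi_1(X) \big/ \langle\!\langle \mu_T \rangle\!\rangle \cong \pi_1(S^4 \sm \Sigma) \cong \Z$, using that $\Sigma$ is a $\Z$-surface, with generator a meridian of $\Sigma$. The hypothesis also implies that $s(m)$ is $0$-homotopic in $S^4 \sm \Sigma$, so $s(m)$ already represents $1$ in this $\Z$, and imposing the further relation $s(m) = 1$ does nothing. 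Therefore $\pi_1\bigl( E_{\Sigma_{t,r}(T,J)} \bigr) \cong \Z$, generated by a meridian, as required. The substantive ingredient is the hypothesis on $s(m)$, which is exactly what is needed to kill the knot-group factor; the remaining steps are routine bookkeeping in van Kampen, and the only mild subtlety is verifying that refilling $N(T)$ contributes no relations beyond $\mu_T = 1$, which is immediate because $s(\ell)$ and $s(m)$ already commute in $\pi_1(X)$ by virtue of being represented on $\partial N(T)$.
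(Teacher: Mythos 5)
Your proof is correct and follows essentially the same approach as the paper: decompose the exterior along $\partial N(T) \cong T^3$, apply Seifert--van Kampen, use the hypothesis on $s(m)$ to kill $\mu_J$ and hence the entire knot-group factor, and then reduce to $\pi_1(X)/\langle\!\langle \mu_T\rangle\!\rangle \cong \pi_1(S^4 \setminus \Sigma) \cong \Z$ by refilling $N(T)$. Your presentation is slightly more explicit than the paper's about why refilling imposes no relation beyond $\mu_T = 1$ and about the redundancy of quotienting by $s(m)$, but the substance is identical.
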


\begin{proof}
	Let $E :=S^4 \setminus N(\Sigma_{t,r}(T, J))$ and $E_\Sigma := S^4 \setminus N(\Sigma)$. Then
	\[
	E =(E_\Sigma \setminus N(T)) \cup_{\partial \overline{N(T)}} (E_J \times S^1).
	\]
	Hence, by the Seifert--van Kampen theorem,
	\[
	\pi_1(E) \cong \pi_1(E_\Sigma \setminus N(T)) \ast_{\Z^3} (\pi_1(E_J) \times \Z),
	\]
	where $\pi_1(T^3) \cong \Z^3$ is generated by the curves $\mu_J \times \{1\}$, $\lambda_J \times \{1\}$, and $\{b\} \times S^1$ (recall that $\{b\} := \mu_J \cap \lambda_J$).
	Since $\mu_J \times \{1\} = f(s(m))$ and $s(m)$ is 0-homotopic in $E_\Sigma \setminus N(T)$ by assumption, $\mu_J \times \{1\}$ is 0-homotopic in $E$. As $\mu_J$ normally generates $\pi_1(E_J)$, the curve $\lambda_J \times \{1\}$ is also 0-homotopic in $E$. However, $f(\mu_T) = \lambda_J \times \{1\}$, so $\mu_T$ is 0-homotopic in $E$. Finally, as $f(s(\ell)) = \{b\} \times S^1 + t (\mu_J \times \{1\}) + r (\lambda_J \times \{1\})$ and $\mu_J \times \{1\}$ and $\lambda_J \times \{1\}$ are 0-homotopic in $E$, the curve $\{b\} \times S^1$ is homotopic to $s(\ell)$ in $E$. Hence
	\[
	\pi_1(E) \cong \pi_1(E_\Sigma \setminus N(T))/\langle \mu_T \rangle \cong \pi_1(E_\Sigma) \cong \Z,
	\]
	where $\langle \mu_T \rangle$ is the normal subgroup generated by $\mu_T$, and the second isomorphism follows from the decomposition $E_\Sigma = (E_\Sigma \setminus N(T)) \cup N(T)$ via the Seifert--van Kampen theorem.
\end{proof}

The intersection $S_\gamma \cap N(T)$ is a collection of meridional discs. The boundary of each disc is a meridian of $T$. In the definition of the glueing map $f \colon \partial \overline{N(T)} \to \partial E_J \times S^1$, we have that $f(\mu_T) = \lambda_J \times \{1\}$. Hence each meridian of $T$ is taken to a parallel of $J$. 
For each $\gamma \in \alphas \cup \betas$, construct $S_\gamma' \subseteq S^4_{t, r}(T, J)$ by first making $S_\gamma$ transverse to $T$, and replacing each component of $S_\gamma \cap N(T)$, which is a disc, with a Seifert surface of $J$ in $E_J \times \{p\}$ for some $p \in S^1$. We can assume that we have a different $p$ for every component, so these Seifert surfaces are mutually disjoint.

We have assumed that $s(m)$ is 0-homotopic in $S^4 \setminus (\Sigma \cup T)$. So, by the proof of Lemma~\ref{lem:pi1}, the map $\pi_1(E_J) \to \pi_1(S^4_{t, r}(T, J) \setminus N(\Sigma))$ induced by the embedding is trivial. Hence $S_\gamma'$ is a $\Z$-surface in $S^4_{t,r}(T, J) \setminus N(\Sigma)$.
We have that
\begin{equation}\label{eqn:disjoint-surfaces-S-primes} S_{\alpha_i}' \cap S_{\alpha_j}' = S_{\beta_i}' \cap S_{\beta_j}' = S_{\alpha_i}' \cap S_{\beta_j}'= \emptyset\end{equation}
for every $i$, $j \in \{1, \dots, g\}$ with $i \neq j$. This follows from the analogous facts in \eqref{eqn:disjoint-surfaces-Ss} for the $S_{\alpha_i}$ and the~$S_{\beta_i}$, together with the fact that the Seifert surfaces used in the construction of the $S_{\gamma}'$ are mutually disjoint.

As above let \[E := S^4 \setminus N(\Sigma_{t,r}(T, J)).\]
We can arrange that $S_{\alpha_i} \cap S_{\beta_j} \subseteq S^4 \setminus N(\Sigma \cup T)$ for every $i$, $j \in \{1, \dots, g\}$. Then $S_{\alpha_i}' \cap S_{\beta_j}' \subseteq S^4 \setminus N(\Sigma \cup T)$ for every $i$, $j \in \{1, \dots, g\}$ as well. Let
\[
U_\gamma := \Theta(S_\gamma') \subseteq E
\]
for $\gamma \in \alphas \cup \betas$, oriented in such a way that $\Theta|_{S_\gamma'} \colon S_\gamma' \to U_\gamma$ is orientation-preserving.
We compute the intersection numbers among the $U_{\gamma}$.
Since each $S_{\gamma}'$ is an embedded $\Z$-surface, each $U_{\gamma}$ is an embedded $\Z$-surface. Since $U_{\gamma} \subseteq S^4$, it has trivial normal bundle.
 Thus $\lambda_E([U_\gamma],[U_{\gamma}])=0$ for every $\gamma \in \alphas \cup \betas$.
 Since $\Theta$ is a diffeomorphism, the analogous facts in \eqref{eqn:disjoint-surfaces-S-primes} for the $S_{\gamma}'$  imply that
 \[U_{\alpha_i} \cap U_{\alpha_j} = U_{\beta_i} \cap U_{\beta_j} = U_{\alpha_i} \cap U_{\beta_j}= \emptyset\]
 for every $i$, $j \in \{1, \dots, g\}$ with $i \neq j$.
In order to complete the computation of the intersection numbers among the $U_\gamma$, it remains to compute $\lambda([U_{\alpha_i}],[U_{\beta_i}])$, for $i=1,\dots,g$.

\begin{prop}\label{prop:intersection}
	Let $q \in S_{\alpha_i} \cap S_{\beta_i}$ be an intersection point with sign $\varepsilon \in \{+, -\}$ and double point group element $t^k \in \langle t \rangle \cong C_\infty \cong \Z$. Since $q \in S^4 \setminus N(\Sigma \cup T)$, we can consider $\Theta(q) \in U_{\alpha_i} \cap U_{\beta_i} \subseteq S^4 \setminus N(\Sigma_{t, r}(T, J))$. Then $\Theta(q)$ also has sign $\varepsilon$ and double point group element $t^k$. It follows that $\lambda_E([U_{\alpha_i}],[U_{\beta_i}]) = \lambda_{E_{\Sigma}}([S_{\alpha_i}],[S_{\beta_i}]) = 1-t$.
\end{prop}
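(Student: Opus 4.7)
The plan is to show that both the intersection sign and the double-point group element at $q$ are determined by data that can be arranged to lie entirely in the common region $R := S^4 \setminus N(\Sigma \cup T)$, on which the surfaces $S_\gamma$ and $S'_\gamma$ coincide for every $\gamma \in \alphas \cup \betas$, and then to compare the two ambient fundamental groups by reducing to their common abelianisation.

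First I would set up the basing data. Choose a basepoint $x_0 \in R$, and whiskers from $x_0$ to $S_{\alpha_i}$ and $S_{\beta_i}$ disjoint from $N(T)$; this is possible since $T$ has codimension two in $E_\Sigma$. By the construction of $S'_\gamma$, the same whiskers serve for $S'_{\alpha_i}$ and $S'_{\beta_i}$, and their $\Theta$-images serve for $U_{\alpha_i}$ and $U_{\beta_i}$. The loop $\gamma_q$ computing the double-point group element at $q$ can be chosen to lie entirely in $R$: the relevant arcs inside $S_{\alpha_i}$ and $S_{\beta_i}$ from the whisker feet to $q$ can be routed to avoid the discs $S_{\alpha_i} \cap N(T)$ and $S_{\beta_i} \cap N(T)$, since removing finitely many open discs from a connected surface leaves it connected. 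Sign preservation is then immediate: $\Theta$ is an orientation-preserving diffeomorphism, and the orientations of $U_\gamma$ agree with those of $S_\gamma$ outside $N(T)$, so the oriented tangent planes at $\Theta(q)$ correspond to those at $q$ under $d\Theta$.

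The preservation of the group element is the main content. Both $\pi_1(E_\Sigma, x_0)$ and $\pi_1(S^4_{t,r}(T,J) \setminus N(\Sigma), x_0)$ are isomorphic to $\Z$ and are generated by the class of a meridian $\mu_\Sigma$ that sits in $R$; the former by the hypothesis that $\Sigma$ is topologically unknotted, and the latter by \cref{lem:pi1}. Consequently, both inclusion-induced maps from $\pi_1(R, x_0)$ factor through the abelianisation $H_1(R) \cong \Z\langle \mu_\Sigma \rangle \oplus \Z\langle \mu_T \rangle$, which one computes by Alexander duality. Both maps send $\mu_\Sigma$ to the chosen generator. The first kills $\mu_T$, because $\mu_T$ bounds a meridional disc of $T$ inside $N(T) \subseteq E_\Sigma$. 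The second kills $\mu_T$ as well: the van Kampen calculation in the proof of \cref{lem:pi1} shows that $\lambda_J \times \{1\}$, and hence its preimage $\mu_T$ under $f$, becomes null-homotopic after gluing in $E_J \times S^1$, using the hypothesis that $s(m)$ is $0$-homotopic in $R$. The two maps therefore agree on $[\gamma_q]$, so both read off the group element $t^k$. Pushing forward by the orientation-preserving diffeomorphism $\Theta$, which carries $\mu_\Sigma$ to a meridian of $\Sigma_{t,r}(T,J)$, then gives $t^k$ as the double-point group element at $\Theta(q)$.

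The main obstacle is essentially bookkeeping: making sure that a single choice of basepoint and whiskers simultaneously computes the double-point group element before and after surgery, and that $\Theta$ identifies the two copies of $\Z$ via the meridian $\mu_\Sigma$. Both points reduce to the observation that everything of interest can be placed in $R$, where neither the ambient manifold nor the surfaces are affected by the knot surgery, so the only genuine input is the comparison of the inclusion-induced maps on $\pi_1$, handled by the abelianisation argument above.
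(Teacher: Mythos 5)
Your argument is correct, and it takes a genuinely different route from the paper's. You localise everything --- basepoint, whiskers, and the double-point loop $\gamma_q$ --- inside the region $R := S^4 \setminus N(\Sigma \cup T)$, which is common to $E_\Sigma$, to $S^4_{t,r}(T,J) \setminus N(\Sigma)$, and (via $\Theta$) to $S^4 \setminus N(\Sigma_{t,r}(T,J))$, and then compare the two inclusion-induced maps by factoring through $H_1(R) \cong \Z\langle\mu_\Sigma\rangle \oplus \Z\langle\mu_T\rangle$, where both maps are seen to kill $\mu_T$ and to send $\mu_\Sigma$ to the generator. The paper instead works from the other side: it takes an explicit disc $V$ in $S^4$ with $\partial V = \gamma_q$ and $|V \pitchfork \Sigma|_{\mathrm{alg}} = k$, modifies $V$ inside $N(T)$ by replacing the small meridional discs $V\cap N(T)$ with Seifert surfaces of $J$ in $E_J \times S^1$ (which caps $\lambda_J \times \{1\}$) to produce $V'$, and observes that $W := \Theta(V')$ still has algebraic intersection $k$ with $\Sigma_{t,r}(T,J)$, so $[\partial W]=t^k$. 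The two approaches ultimately rest on the same principle --- that the double-point group element in a $\Z$-group is read off by a linking/intersection count --- but yours avoids constructing the modified bounding surface explicitly, at the modest cost of an Alexander duality computation of $H_1(R)$ and a careful choice of loop representative, whereas the paper's more geometric argument foregrounds exactly where the knot surgery modifies the ambient manifold. Both are valid; yours is slightly cleaner as pure algebra, the paper's is more self-contained and concrete.
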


\begin{proof}
	As $\Theta$ is orientation-preserving and the orientations of $U_{\alpha_i}$ and $U_{\beta_i}$ are induced by $\Theta$, the intersection point $\Theta(q)$ has the same sign as $q$.
	
	Let $V \subseteq S^4$ be a disc such that $\partial V$ is a double point loop for $q$. Then $V$ intersects $\Sigma$ transversely in~$k$ points, counted with sign. We can arrange that $V$ is transverse to $T$. Replace $V \cap N(T)$ with Seifert surfaces of $J$ to obtain $V' \subseteq S^4_{t, r}(T, J)$, and write $W := \Theta(V') \subseteq S^4$. Then $\partial W$ is a double point loop for $\Theta(q)$, and $W$ intersects $\Sigma_{t, r}(T, J)$ transversely in $k$ points, again counted with sign. As
$\Theta \colon S^4_{t,r}(T, J) \setminus N(\Sigma) \to S^4 \setminus N(\Sigma_{t,r}(T, J))$ induces an isomorphism on fundamental groups, both of which are isomorphic to $\Z$, it follows that the double point group element of $\Theta(q)$ is $t^k$.  We deduce that $\lambda_E([U_{\alpha_i}],[U_{\beta_i}]) = \lambda_{E_{\Sigma}}([S_{\alpha_i}],[S_{\beta_i}])$ for $i \in \{1,\dots,g\}$.
\end{proof}

By Proposition~\ref{prop:intersection} and the computations above it, the matrix of $\lambda_E$ on $\{U_{\alpha_1}, U_{\beta_1}, \dots, U_{\alpha_g}, U_{\beta_g}\}$ is $\bigoplus_g \cH_2$.
It follows from Lemma~\ref{lemma:main-strategy} that $\{U_{\alpha_1}, U_{\beta_1}, \dots, U_{\alpha_g}, U_{\beta_g}\}$ is a basis of $H_2(E; \Lambda)$, and that
\[
\lambda_E \cong \bigoplus_g \cH_2.
\]
Thus $\Sigma_{t, r}(T, J)$ is topologically unknotted by Theorem~\ref{thm:intersection-form}. This concludes the proof of Theorem~\ref{thm:knot-surgery}.
\end{proof}

\section{The Cochran--Davis Seifert surface union a ribbon disc}\label{sec:Cochran-Davis-examples}

A natural idea for constructing potentially nontrivial, closed, orientable $\Z$-surfaces, smoothly embedded in $S^4$, is to take the union of a Seifert surface $F$ for a slice knot $K$ in $S^3$ and a homotopy ribbon disc $D$ for $K$. However, by the following lemma, this is smoothly unknotted if $D$ is obtained from $F$ by compressing along a slice derivative, which is often the case. Recall that a slice derivative for a genus $g$ Seifert surface $F$ for a knot $K$ in $S^3$ is a collection of $g$ pairwise disjoint, homologically linearly independent simple closed curves that form a smoothly slice link such that the Seifert form vanishes identically on the submodule of $H_1(F)$ generated by these curves.

\begin{lem}\label{lem:slice-derivative}
	Let $F \subseteq S^3$ be a genus $g$ Seifert surface, and suppose that $c \subseteq F$ is a slice derivative. Write $D$ for the slice disc obtained by surgering $F$ along $c$. Then $\Sigma := F \cup D$ is smoothly unknotted.
\end{lem}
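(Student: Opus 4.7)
My plan is to produce a smooth $3$-dimensional handlebody $V \subseteq S^4$ of genus $g$ with $\partial V = \Sigma$, since a surface that smoothly bounds a handlebody is smoothly isotopic to the standard genus-$g$ surface in $S^3 \subseteq S^4$, hence smoothly unknotted.

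Using a bicollar $S^3 \times [-\epsilon, \epsilon] \subseteq S^4$, I consider the $3$-submanifold $F \times [0,\epsilon] \subseteq S^3 \times [0,\epsilon]$, with $F \times \{0\} = F$. Since the components $c_1, \ldots, c_g$ of the slice derivative form a smoothly slice link, they bound pairwise disjoint smoothly embedded disks $\Delta_i$ in the upper $4$-ball $D^4_+ \subseteq S^4$; after isotopy I arrange that $\partial \Delta_i = c_i \times \{\epsilon\}$ and that the $\Delta_i$ meet $F \times [0,\epsilon]$ only along their boundaries. The self-linking zero hypothesis is precisely what is needed to extend the bicollar direction of $c_i$ in $F$ over $\Delta_i$, producing a thickening $\Delta_i \times [-1,1] \subseteq D^4_+$ along a non-vanishing section of $\nu_{\Delta_i \subseteq D^4_+}$. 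I then set
\[ V := (F \times [0,\epsilon]) \ \cup \ \bigcup_{i=1}^{g} (\Delta_i \times [-1,1]), \]
a smooth $3$-submanifold of $S^4$. After smoothing corners and absorbing the vertical cylinder $\partial F \times [0,\epsilon]$ by an isotopy, the top of $\partial V$ is exactly the surface obtained by surgering $F \times \{\epsilon\}$ along the curves $c_i \times \{\epsilon\}$ via the thickened slice disks, which by construction is $D$; thus $\partial V = F \cup D = \Sigma$.

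It remains to identify $V$ as a genus-$g$ handlebody. Note that $F \times I$ is itself a genus-$2g$ handlebody: it is a compact, orientable, irreducible $3$-manifold with non-empty boundary and free fundamental group of rank $2g$. By construction, $V$ is obtained from $F \times I$ by attaching $g$ three-dimensional $2$-handles along the $c_i \subseteq F \times \{\epsilon\}$ with the Seifert framing, which is correctly matched by the self-linking zero condition. Hence $\pi_1(V)$ is the quotient of $\pi_1(F)$ by the normal subgroup generated by $[c_1], \ldots, [c_g]$.

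The main obstacle is the algebraic claim that the $[c_i]$ extend to a free basis of $\pi_1(F)$, whence $\pi_1(V)$ is free of rank $g$. This follows from the cut-system picture: since the $c_i$ are disjoint, homologically linearly independent simple closed curves, $F \setminus c$ is connected, and cutting $F$ along $c$ yields a connected planar surface with $2g + 1$ boundary components. Reconstructing $F$ from this planar piece as an iterated HNN extension (identifying the pairs of boundary circles created by the cuts), one obtains, after Tietze transformations, a free basis of $\pi_1(F)$ of the form $\{c_1, \ldots, c_g, \beta_1, \ldots, \beta_g\}$, where the $\beta_j$ are HNN stable letters realized by simple closed curves dual to the $c_j$. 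Equivalently, one can choose a handle decomposition of $F$ with one $0$-handle and $2g$ $1$-handles adapted to the cut system $\{c_i\}$, so that the $2$-handles of $V$ geometrically cancel $g$ of the $1$-handles of $F \times I$. Either approach shows that $V$ is a compact orientable irreducible $3$-manifold with non-empty boundary, free $\pi_1$ of rank $g$, and boundary a closed surface of genus $g$, so $V$ is a genus-$g$ handlebody by the standard characterization of handlebodies (see Hempel's book), which concludes the proof.
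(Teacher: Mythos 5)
Your proof is correct and follows essentially the same route as the paper's: exhibit the embedded 3-manifold $H = F \times I \cup (\text{2-handles along } c)$ with $\partial H = \Sigma$, then verify that $H$ is a genus-$g$ handlebody. The paper's version of the last step is a one-line assertion from homological linear independence, whereas you expand it into the cut-system/handle-cancellation argument (adapting a handle decomposition of $F$ to $\{c_i\}$ so the 2-handles geometrically cancel $g$ of the 1-handles of $F \times I$); that second, concrete version of your argument is the cleanest and avoids needing to invoke the Hempel characterization of handlebodies and the attendant irreducibility check. Your remark on the self-linking-zero hypothesis being exactly the framing condition that lets the slice discs be thickened compatibly is also a useful explicit point that the paper leaves implicit.
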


\begin{proof}
	Since $D$ is obtained by surgering $F$ along $c$, it follows that $\Sigma$ bounds an embedded copy of $F \times I$ with a collection of 3-dimensional 2-handles attached along $c \times \{0\}$, which we denote by $H$. Since the components of $c$ are $g$ homologically linearly independent simple closed curves in $F$, the 3-manifold $H$ is a genus $g$ handlebody. To see this, note that, if we turn the relative handle decomposition of $H$ built on $F \times I$ upside down, we obtain a handle decomposition of $H$ with only 1-handles relative to $D^2 \times I$, as $D^2$ is the surface obtained by compressing $F$ along $c$. So one can obtain $H$ by attaching 3-dimensional 1-handles to $D^2 \times I$, which is a handlebody. Since $\partial H = \Sigma$, it follows that $\Sigma$ is smoothly unknotted. 	
\end{proof}
	
Kauffman conjectured that every genus one Seifert surface for a slice knot admits a slice derivative, but Cochran and Davis~\cite{Cochran-Davis} constructed a counterexample. This motivates \cref{thm:ribbon}, which we restate for the reader's convenience.
	
\ribbon*

Before beginning the proof,  we recall the Cochran--Davis construction~\cite{Cochran-Davis}.
It starts with the ribbon knot $R$ with Seifert surface $F'$ on the left of \cref{fig:CD-knot-1}. We obtain a ribbon disc $\Delta \subseteq D^4$ for $R$ by cutting the left band with a saddle move, as shown in \cref{fig:CD-knot-2}, and then capping off the resulting 2-component unlink with discs. This is equivalent to surgering $F'$ along the slice derivative $\alpha'$, since cutting the left band of $F'$ results in an annulus with core $\alpha'$.

\begin{figure}
	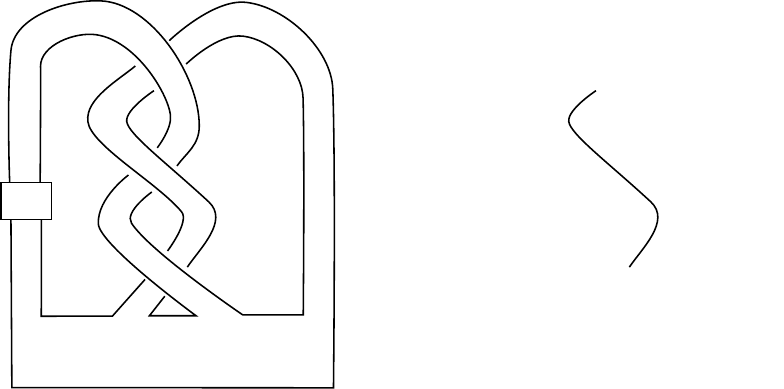
 \caption{Left: a ribbon knot $R$, together with a Seifert surface $F'$ and a slice derivative $\alpha'$ on $F'$.
  Right: the same knot $R$, with two embedded curves $\eta_1$ and $\eta_2$ in $S^3 \sm R$ such that $\eta_1 \sqcup \eta_2$ is an unlink. See \cite[Figure~6]{Cochran-Davis}.}
 \label{fig:CD-knot-1}
  \end{figure}

 The curves $\eta_1 \sqcup \eta_2$ shown on the right of \cref{fig:CD-knot-1} form an unlink, and cobound an annulus $A$ smoothly embedded in $D^4 \sm \Delta$ by~\cite[Proposition~5.1]{Cochran-Davis}. To construct this annulus, perform a saddle move on $\eta_1 \sqcup \eta_2$ and then on $R$, as shown in \cref{fig:CD-knot-2}. \cref{fig:CD-knot-2} now shows a 3-component unlink,
 so the result of the saddle moves can be capped off with disjoint discs to produce the rest of~$\Delta$ and the rest of~$A$.

\begin{figure}
\begingroup%
  \makeatletter%
  \providecommand\color[2][]{%
    \errmessage{(Inkscape) Color is used for the text in Inkscape, but the package 'color.sty' is not loaded}%
    \renewcommand\color[2][]{}%
  }%
  \providecommand\transparent[1]{%
    \errmessage{(Inkscape) Transparency is used (non-zero) for the text in Inkscape, but the package 'transparent.sty' is not loaded}%
    \renewcommand\transparent[1]{}%
  }%
  \providecommand\rotatebox[2]{#2}%
  \newcommand*\fsize{\dimexpr\f@size pt\relax}%
  \newcommand*\lineheight[1]{\fontsize{\fsize}{#1\fsize}\selectfont}%
  \ifx\svgwidth\undefined%
    \setlength{\unitlength}{171.20688157bp}%
    \ifx\svgscale\undefined%
      \relax%
    \else%
      \setlength{\unitlength}{\unitlength * \real{\svgscale}}%
    \fi%
  \else%
    \setlength{\unitlength}{\svgwidth}%
  \fi%
  \global\let\svgwidth\undefined%
  \global\let\svgscale\undefined%
  \makeatother%
  \begin{picture}(1,1.08933096)%
    \lineheight{1}%
    \setlength\tabcolsep{0pt}%
    \put(0,0){\includegraphics[width=\unitlength,page=1]{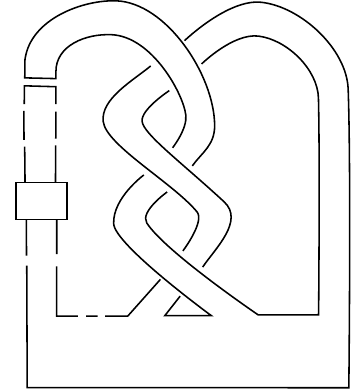}}%
    \put(0.95446969,0.98731582){\makebox(0,0)[lt]{\lineheight{1.25}\smash{\begin{tabular}[t]{l}$R$\end{tabular}}}}%
    \put(0.07086766,0.50617214){\makebox(0,0)[lt]{\lineheight{1.25}\smash{\begin{tabular}[t]{l}$+3$\end{tabular}}}}%
    \put(0,0){\includegraphics[width=\unitlength,page=2]{fig4.pdf}}%
  \end{picture}%
\endgroup%

 \caption{The saddle moves for constructing the ribbon disc $\Delta$ and the annulus $A$.}
 \label{fig:CD-knot-2}
  \end{figure}

Cochran and Davis perform a satellite construction on $D^4$ along the annulus $A \subseteq D^4$ with $\partial A = \eta_1 \sqcup \eta_2 \subseteq S^3$.
We write
\[S^4 \cong S^4_+ \cup_{S^3} S^4_-,\]
where $S^4_{\pm} \cong D^4$.
Using the identification $S^4_+ \cong D^4$, we have $\Delta \subseteq S^4_+$. We push the Seifert surface $F'$ for $R$ into~$S^4_-$.
Consider the surface $\Sigma':= \Delta \cup F' \subseteq S^4_+ \cup S^4_-$. Relabel $A$ as $A_+ \subseteq S^4_+$, and let $A_-$ denote another copy of $A$ in $S^4_-$. By only pushing $F'$ into $S^4_-$ a small amount, so it lives in a collar
of the boundary $S^3$,  we may assume that $A_- \cap F' = \emptyset$.

Our goal is to perform the satellite construction mentioned above (and explained in detail below)
simultaneously on $A_+$ and~$A_-$, which will produce the knot $K$, with a ribbon disc $D$ and a Seifert surface $F$. Then $\Sigma := D \cup F \subseteq S^4$ is the $\Z$-torus that is the subject of \cref{thm:ribbon}.

Let $J$ be one of the knots in $S^3$ considered by Cochran and Davis, for example the left-handed trefoil, and let $E_J$ be its exterior.
Cochran and Davis parametrise $A$ as $S^1 \times [0,1]$, and then perform a 1-parameter satellite construction on $D^4$, removing a neighbourhood $S^1 \times [0,1] \times B^2 \cong S^1 \times B^2 \times [0,1]$, and glueing in $E_J \times [0,1]$ instead, in such a way that each meridian $\{p\} \times S^1 \times \{t\}$ is identified with a longitude of $J$ in $E_J \times \{t\}$ for $p \in S^1$ and $t \in I$. They show that the result of this construction is a manifold $W$ that comes with a diffeomorphism  $\Phi \colon W \to D^4$; see~\cite[Theorem~3.1]{Cochran-Davis}.
They define the knot $K$ to be $\Phi(R)$, the disc $D$ to be $\Phi(\Delta)$, and the Seifert surface $F$ to be $\Phi(F')$.

The knot $K$ can be obtained  with a purely 3-dimensional construction, as follows. Perform two simultaneous infections (satellite constructions) on the curves $\eta_1$ and $\eta_2$, with companion knots~$J$ and~$-J$ respectively.  The resulting knot $K:= R_{\eta_1,\eta_2}(J,-J)$ is the \emph{Cochran--Davis knot}.
	
\begin{proof}[Proof of \cref{thm:ribbon}]
We consider the decomposition $S^4 = S^4_+ \cup_{S^3} S^4_-$.
We perform the satellite construction on both $S^4_+$ and $S^4_-$ along the annuli $A_+$ and $A_-$, respectively. This produces 4-manifolds~$W_{\pm}$ with $\partial W_{\pm} \cong S^3$ and diffeomorphisms $\Phi_{\pm} \colon W_{\pm} \to S^4_{\pm}$.
The knot~$R$, the pushed-in Seifert surface~$F'$, and the disc~$\Delta$ are all disjoint from $A_{\pm}$, and so represent analogous objects in $W_{\pm}$. We then have the pushed-in Seifert surface $F := \Phi_-(F') \subseteq S^4_-$ for the knot $K := \Phi_{\pm}(R) \subseteq S^3$, and ribbon disc $D := \Phi_+(\Delta) \subseteq S^4_+$. So
\[
\Sigma := F \cup D = \Phi_-(F') \cup \Phi_+(\Delta).
\]
Hence $\Sigma$ is obtained from $\Sigma' = F' \cup \Delta$ via knot surgery along the torus $T_A := A_+ \cup A_-$ with pattern the knot $J$.
To see this, note that knot surgery removes $N(T)$ and glues in $E_J \times S^1$. Our construction removes $N(A_+) \cup N(A_-)$ and glues in $(E_J \times [0,1]) \cup (E_J \times [0,1]) \cong E_J \times S^1$. As the glueing maps coincide, $\Sigma$ is indeed obtained via knot surgery.

Note that $\pi_1(D^4 \setminus F') \cong \Z$, generated by a meridian by~\cite[Lemma~3.1]{Chen}. Since $\Delta$ is homotopy ribbon, it follows from the Seifert--van Kampen theorem that $\Sigma'$ is a $\Z$-torus.

As $\eta_1 \sqcup \eta_2$ is an unlink, and we obtain $A$ from it by attaching a trivial band and capping off the resulting unknot with a disc, it follows that $A$ and hence $T_A$ are smoothly unknotted. The surface $\Sigma'$ is also unknotted by Lemma~\ref{lem:slice-derivative}, since one can obtain $\Delta$ by compressing $F'$ along the slice derivative $\alpha'$ shown on the left of Figure~\ref{fig:CD-knot-1}.

The annulus $A_-$ is obtained by pushing $A$ into $S^4_-$ deeper than $F_-$, hence the meridian $m$ of $A_-$ bounds a disc
whose interior is disjoint from $A_- \cup F_-$.
So the meridian of $T_A$ also bounds a disc in~$S^4 \setminus \Sigma'$ whose interior lies in $S^4 \setminus (\Sigma' \cup T_A)$.

So $\Sigma'$ and $T_A$ satisfy all the assumptions of Theorem~\ref{thm:knot-surgery}, and hence the result $\Sigma$ of knot surgery on~$\Sigma'$ along~$T_A$ with pattern $J$ is topologically unknotted, as claimed.
\end{proof}
		
We close this section by noting another general case when a genus one Seifert surface union a slice disc is a topologically unknotted $\Z$-torus, suggested by Anthony Conway.  Let $K \subseteq S^3 \subseteq S^4 = S^4_+ \cup S^4_-$ be a genus one knot with Alexander polynomial $\Delta_K \doteq 1$.  Let $D \subseteq S^4_+$ be a $\Z$-slice disc  for $K$, and let $F$ be a genus one Seifert surface for $K$ pushed into $S^4_-$.  Let $\Sigma := F \cup_K D$.

\begin{prop}\label{prop:alex-poly-one-torus}
	The torus $\Sigma \subseteq S^4$ is topologically unknotted.
\end{prop}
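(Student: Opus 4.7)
The plan is to apply \cref{lemma:main-strategy} with $g=1$, by exhibiting two $\Z$-trivial immersed $2$-spheres in $E_\Sigma$ whose $\Lambda$-intersection form has the shape prescribed there. As a preliminary step, I would confirm that $\Sigma$ is a $\Z$-torus. Decomposing
\[
E_\Sigma = (S^4_+ \setminus N(D)) \cup_{E_K} (S^4_- \setminus N(F)),
\]
where $E_K := S^3 \setminus N(K)$, and applying Seifert--van Kampen, $\pi_1(E_\Sigma)$ is the pushout of $\pi_1(S^4_+ \setminus D) \cong \Z$ (since $D$ is $\Z$-slice) and $\pi_1(S^4_- \setminus F)$ over $\pi_1(E_K)$. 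The inclusion-induced map $\pi_1(E_K) \to \pi_1(S^4_- \setminus F)$ is surjective: inspecting the boundary $E_K \cup_{K \times S^1}(F \times S^1)$ of $S^4_-\setminus N(F)$, the curves $\alpha, \beta$ on $F$ and the meridional $S^1$-fibre all lie in the image of $E_K$. Combined with the fact that the $\Z$-slice side forces the image of $\pi_1(E_K)$ to be cyclic, the pushout relations collapse $\pi_1(S^4_- \setminus F)$ onto its abelianisation, so $\pi_1(E_\Sigma) \cong H_1(E_\Sigma) \cong \Z$.

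Now let $\{\alpha, \beta\} \subseteq F$ be simple closed curves forming a symplectic basis of $H_1(F)$ with $|\alpha \pitchfork \beta| = 1$, and let $\alpha^+, \beta^+ \subseteq S^3 \setminus F$ be their positive pushoffs, which are longitudes of the rim tori $R_\alpha, R_\beta \subseteq \partial N(\Sigma)$. Since $\alpha, \beta$ are disjoint from $K = \partial F$, we have $\lk_{S^3}(\alpha^+, K) = \lk_{S^3}(\beta^+, K) = 0$, so $\alpha^+, \beta^+$ are null-homologous in $E_\Sigma$, and null-homotopic because $\pi_1(E_\Sigma) \cong \Z$. They therefore bound $\Z$-trivial immersed discs $D_\alpha, D_\beta \subseteq E_\Sigma$; surgering $R_\alpha, R_\beta$ along these with appropriate framings produces $\Z$-trivial immersed $2$-spheres $S_\alpha, S_\beta \subseteq E_\Sigma$.

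By \cref{lem:computing-intersections}\eqref{it:comp-intersections-iv}, both self-intersections $\lambda(S_\alpha, S_\alpha)$ and $\lambda(S_\beta, S_\beta)$ have the form $c(1-t) + \overline{c}(1-t^{-1})$, matching the diagonal of the matrix in \cref{lemma:main-strategy}. For the off-diagonal entry, after pushing $R_\beta$ slightly off $R_\alpha$ inside $E_\Sigma$ so that $S_\beta$ is disjoint from $R_\alpha$, I would apply \cref{lem:computing-intersections}\eqref{it:comp-intersections-iii} with $T = R_\alpha$, $D = D_\alpha$, and $G = S_\beta$. This yields $\lambda(S_\alpha, S_\beta) = \pm(1-t)$ up to a unit, provided $|D_\alpha \pitchfork S_\beta| = 1$; the single expected intersection corresponds, after surgery, to the meridional arc of $R_\beta$ lying above $\alpha \pitchfork \beta \in F$. \Cref{lemma:main-strategy} then finishes the proof.

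The main obstacle will be realising the count $|D_\alpha \pitchfork S_\beta| = 1$ on the nose. A naive placement of both surgery discs inside $S^4_+$ (from Seifert surfaces for $\alpha^+, \beta^+$ pushed off $S^3$ and tubed off intersections with $D$) produces $|D_\alpha \pitchfork S_\beta|_{\mathrm{alg}} = 2V(\alpha, \beta)$, which is always even and never $\pm 1$. The hypothesis $\Delta_K \doteq 1$ is needed at exactly this point: it forces the Alexander module of $K$ to vanish, so that a Mayer--Vietoris computation yields $H_2(E_\Sigma;\Lambda) \cong H_2(S^4_-\setminus F;\Lambda) \cong \Lambda^2$, and one can instead place $D_\beta$ inside $S^4_- \setminus F$ (where $\beta^+$ bounds a $\Z$-trivial disc in an appropriately cyclic portion of $\pi_1$) while keeping $D_\alpha \subseteq S^4_+$. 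The two discs are then automatically disjoint across $S^3$, so the only contribution to $D_\alpha \pitchfork S_\beta$ comes from the rim-meridian configuration over $\alpha \pitchfork \beta$, which gives the required single transverse intersection.
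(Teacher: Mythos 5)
Your high-level plan tracks the paper's argument: you decompose $E_\Sigma = E_D \cup_{E_K} E_F$, identify $\pi_1(E_\Sigma) \cong \Z$, correctly locate the role of $\Delta_K \doteq 1$ (forcing $H_*(E_K;\Lambda)$ to vanish in the middle dimensions, so that Mayer--Vietoris identifies $H_2(E_\Sigma;\Lambda)$ with $H_2(E_F;\Lambda)$), and aim to exhibit a basis of rim tori surgered to spheres realising $\cH_2$. Up to this point you and the paper agree; the difference is that the paper then simply cites \cite[Proposition~7.10]{Conway-Powell}, which says that the $\Lambda$-intersection form of a pushed-in Seifert surface exterior $E_F$ is isometric to $\cH_2^{\oplus g}$, and the isomorphism $H_2(E_F;\Lambda) \cong H_2(E_\Sigma;\Lambda)$ transports this isometry, finishing the proof in three lines. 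You instead attempt to re-derive that isometry by hand, and this is where the argument has a genuine gap.

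Specifically, the step asserting $|D_\alpha \pitchfork S_\beta| = 1$ ``on the nose'' is not substantiated, and the proposed remedy is internally inconsistent. You place $D_\alpha$ in $S^4_+$ and $D_\beta$ in $S^4_-\setminus N(F)$, and claim the separation across $S^3$ leaves exactly one intersection point ``from the rim-meridian configuration over $\alpha \pitchfork \beta$.'' But if $D_\alpha$ lives on the $S^4_+$ side, then $S_\alpha$ is not contained in $E_F$, so you are no longer computing the intersection form of $E_F$ — which undercuts the very reduction you set up with Mayer--Vietoris. Conversely, if you keep both discs (hence both $S_\alpha, S_\beta$) inside $E_F$, the rim tori $R_\alpha$ and $R_\beta$ sit over intersecting curves in $F$, so one must perturb them to different normal radii and then carefully track how $D_\alpha$ crosses $R_\beta$; the single transverse intersection you want does occur, but verifying it, together with the $\Z$-triviality of the discs, is exactly the content of Conway--Powell Proposition~7.10 and is not something you can wave through. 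If you want to make your argument self-contained you would need to carry out that computation inside $E_F$ (both discs on the $F$ side), rather than split them across $S^3$; as written the final off-diagonal entry $\pm(1-t)$ is unjustified.
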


\begin{proof}
Let $E_D:= S^4_+ \sm N(D)$, $E_K:= S^3 \sm N(K)$, and $E_F := S^4_- \sm N(F)$. Then $E_\Sigma := S^4 \sm N(\Sigma) = E_D \cup_{E_K} E_F$ and $\pi_1(E_\Sigma) \cong \Z$.
With $\Lambda$-coefficients, $H_i(E_K;\Lambda) = 0$ and $H_i(E_D;\Lambda)=0$ for $i \in \{1,2\}$. The Mayer--Vietoris sequence implies that the inclusion-induced map $H_2(E_F;\Lambda) \xrightarrow{\cong} H_2(E_\Sigma;\Lambda)$ is an isomorphism.  By \cite[Proposition~7.10]{Conway-Powell}, the intersection form of $E_F$ is isometric to $\cH_2$.  It follows that the intersection form of $E_\Sigma$ is also isometric to $\cH_2$, and hence $\Sigma$ is topologically unknotted by \cref{thm:intersection-form}.
\end{proof}

\section{Tori with four critical points}\label{section:four-crit-points}

We now prove our theorem on tori with four critical points, which we recall here.

\morse*

\begin{proof}
  Denote the critical points of $\Sigma$ by $c_0$, $c_1^a$, $c_1^b$, and $c_2$, where the subscript denotes the index.  As before, write $S^4 = S^4_+ \cup_{S^3} S^4_-$, a union of two copies of $D^4$.
  Arrange the critical points of $\Sigma$ such that $c_0$ and $c_1^a$ lie in $S^4_- \cong D^4$, and $c_1^b$ and $c_2$ lie in $S^4_+ \cong D^4$.  The intersection of $\Sigma$ with $S^4_+ \cap S^4_- =S^3$ is a two-component link $L = L_1 \sqcup L_2$, with the property that a single fusion band move yields the unknot, in two different ways.

  Let $E_L := S^3 \sm N(L)$, and define \[W_{\pm} := S^4_{\pm} \sm N(\Sigma).\]
 As before, set $\Lambda := \Z[\Z]$.
Fix an identification $\pi_1(S^4 \sm N(\Sigma)) \cong \Z$.  We then have a homomorphism $\pi_1(E_L) \to \pi_1(S^4 \sm N(\Sigma)) \cong \Z$ and isomorphisms $\pi_1(W_{\pm}) \to \pi_1(S^4 \sm \Sigma) \cong \Z$, giving rise to $\Lambda$-coefficient systems.

\begin{lem}\label{lem:homology-of-W-pm}
  We have
\begin{enumerate}[font=\normalfont, label = (\roman*), ref= \roman*]
  \item $H_1(W_{\pm};\Lambda)=0 = H_3(W_{\pm};\Lambda)$;
  \item $H_2(W_{\pm};\Lambda) \cong \Lambda$;
  \item $H_2(W_{\pm},E_L;\Lambda) \cong \Lambda$;
  \item\label{it:last} $H_3(W_{\pm},E_L;\Lambda) =0 = H_4(W_{\pm},E_L;\Lambda)$;
\end{enumerate}
\end{lem}

\begin{proof}
  By the rising water principle, $W_{\pm}$ has a handle decomposition with one 0-handle, one 1-handle, and one 2-handle. The corresponding  handle chain complex with $\Lambda$ coefficients has the form
  \[
  \Lambda \xrightarrow{(0)} \Lambda \xrightarrow{(1-t)} \Lambda,
  \]
  supported in degrees $2$, $1$, and $0$,
  and hence $H_1(W_{\pm};\Lambda) = 0 = H_3(W_{\pm};\Lambda)$ and  $H_2(W_{\pm};\Lambda) \cong \Lambda$.

  Turning the Morse function upside down, we obtain a handle decomposition for $W_{\pm}$ relative to $E_L$ consisting of one 2-handle, one 3-handle, and one 4-handle. The handle chain complex for the pair $(W_{\pm},E_L)$ has the form
  \begin{equation}\label{eqn:chain-complex}
  \Lambda \xrightarrow{(x)} \Lambda \xrightarrow{(y)} \Lambda,
  \end{equation}
  supported in degrees $4$, $3$, and $2$, for some $x$, $y \in \Lambda$ with $xy=0$.
We will return to this chain complex after proving~\eqref{it:last}.
Write \[\partial_{\pm} := \ol{\partial W_{\pm} \sm E_L}.\]
By Poincar\'{e}--Lefschetz duality,
\[
H_3(W_{\pm},E_L; \Lambda) \cong H^1(W_{\pm},\partial_{\pm};\Lambda) \text{ and }
H_4(W_{\pm},E_L; \Lambda) \cong H^0(W_{\pm},\partial_{\pm};\Lambda).
\]
Since $H_1(W_{\pm};\Lambda)=0$, the long exact sequence of the pair $(W_{\pm},\partial_{\pm})$ ends with
\[0 \to H_1(W_{\pm},\partial_{\pm};\Lambda) \to H_0(\partial_{\pm};\Lambda) \xrightarrow{\cong} H_0(W_{\pm};\Lambda) \to H_0(W_{\pm},\partial_{\pm};\Lambda) \to 0.\]
It follows that $H_1(W_{\pm},\partial_{\pm};\Lambda)=0 = H_0(W_{\pm},\partial_{\pm};\Lambda)$.
So, by the universal coefficient spectral sequence, which gives a short exact sequence for $H^1$, we deduce that $H^1(W_{\pm},\partial_{\pm};\Lambda)=0 = H^0(W_{\pm},\partial_{\pm};\Lambda)$ and hence $H_3(W_{\pm},E_L; \Lambda) = 0 = H_4(W_{\pm},E_L; \Lambda)$.

Now we make use of the handle chain complex~\eqref{eqn:chain-complex}.
Since $H_4(W_{\pm},E_L; \Lambda)= \ker((x)) = 0$, we must have that $x \neq 0$. Since $xy=0$ by the chain complex condition, and since $\Lambda$ is an integral domain, we see that $y=0$.
Then, from the chain complex~\eqref{eqn:chain-complex}, we compute that  $H_2(W_{\pm},E_L;\Lambda) \cong \Lambda$.
\end{proof}

The previous proof is the key place where we used the assumption that $\Delta_L=0$.

\begin{lem}\label{lem:homology-of-EL}
  We have $H_1(E_L;\Lambda) \cong \Lambda \cong H_2(E_L;\Lambda)$, and $H_2(E_L;\Lambda) \to H_2(W_{\pm};\Lambda)$ is an isomorphism.
\end{lem}

\begin{proof}
 By \cref{lem:homology-of-W-pm}, the long exact sequence of the pair $(W_{\pm},E_L)$ with $\Lambda$-coefficients has the form
 \[0 \to H_2(E_L;\Lambda) \to \Lambda \xrightarrow{A}  \Lambda \to H_1(E_L;\Lambda) \to 0.\]
The map $A$ presents $H_1(E_L;\Lambda)$, and so is multiplication by the (single-variable) Alexander polynomial of $L$.  Since we assumed $\Delta_L=0$, it follows that $A$ is the trivial map and $H_1(E_L;\Lambda) \cong \Lambda \cong H_2(E_L;\Lambda)$. Furthermore, $H_2(E_L;\Lambda) \to H_2(W_{\pm};\Lambda)$ is an isomorphism, as claimed.
\end{proof}

Now we consider the Mayer--Vietoris sequence associated with the decomposition
\[
E_\Sigma:= S^4 \sm N(\Sigma) = W_+ \cup_{E_L} W_-.
\]
By Lemmas~\ref{lem:homology-of-W-pm} and \ref{lem:homology-of-EL}, we have
\[H_2(E_L;\Lambda)\cong \Lambda \xrightarrow{(1,1)} \Lambda \oplus \Lambda \to H_2(E_\Sigma;\Lambda) \to \Lambda \cong H_1(E_L;\Lambda) \to 0.\]
This yields  a short exact sequence
\begin{equation}\label{eqn:ses-giving-basis}
  0 \to \Lambda \to H_2(E_\Sigma;\Lambda) \to \Lambda \to 0.
\end{equation}
By \cref{prop:homology-closed}, we already knew that $H_2(E_\Sigma;\Lambda) \cong \Lambda \oplus \Lambda$, but this short exact sequence gives us a basis $\{[S_0], [S_1]\}$, as follows.
Let $S_0 \subseteq E_L$ denote a surface generating $H_2(E_L;\Lambda)$.
Let $\lambda$ be the $\Lambda$-intersection form on $H_2(E_\Sigma; \Lambda)$.
The intersection number $\lambda([S_0],[S_0])$ is trivial,  because we can use the boundary collar $E_L \times [0,1] \subseteq W_-$ to obtain a disjoint copy of $S_0$ that represents the same class in $H_2(E_\Sigma;\Lambda)$.

We define an immersed sphere $S_1$ as follows. Take a loop $\gamma$ in $E_L$ generating $H_1(E_L;\Lambda)$. Since $\gamma$ represents an element of $H_1(E_L;\Lambda)$, it represents an element of $\pi_1(E_L)$ in the kernel of the representation $\pi_1(E_L) \to \Z$ defining the $\Lambda$-coefficients, i.e.\ the total linking number $\lk(\gamma,L_1) + \lk(\gamma,L_2)=0$. The representation $\pi_1(E_L) \to \Z$ coincides with the inclusion-induced maps $\pi_1(E_L) \to \pi_1(W_{\pm}) \cong \Z$, hence $\gamma$ is null-homotopic in $W_{\pm}$. Choose a null-homotopy of $\gamma$ in $W_+$ and a null-homotopy of $\gamma$ in $W_-$, and glue the null-homotopies together. The sphere $S_1$ represents the image of a splitting of the short exact sequence~\eqref{eqn:ses-giving-basis}.

The intersection pairing $\lambda \colon H_2(E_\Sigma;\Lambda) \times H_2(E_\Sigma;\Lambda) \to \Lambda$ is represented with respect to the basis $\{[S_0],[S_1]\}$ for $H_2(E_\Sigma;\Lambda) \cong \Lambda \oplus \Lambda$  by a matrix of the form
		\[
		\begin{pmatrix}
			0 & b \\
			\ol{b}  & d
		\end{pmatrix}.
		\]
We have that
\[H_1(\partial E_\Sigma; \Lambda) \cong H_1(\Sigma \times S^1;\Lambda) \cong H_1(\Sigma \times \R;\Z) \cong  (\Lambda/(1-t))^2\] and $H_1(E_\Sigma;\Lambda)=0$ by parts \eqref{it:H1-lambda} and \eqref{it:boundary-closed} of \cref{prop:homology-closed}. So, the long exact sequence of the pair $(E_\Sigma,\partial E_{\Sigma})$ gives
\[H_2(E_\Sigma;\Lambda) \to H_2(E_\Sigma,\partial E_{\Sigma};\Lambda) \to (\Lambda/(1-t))^2 \to 0.\]
The intersection pairing $\lambda$ is adjoint to the composition
\[H_2(E_\Sigma;\Lambda) \to H_2(E_\Sigma,\partial E_{\Sigma};\Lambda) \xrightarrow{\cong} H^2(E_\Sigma;\Lambda) \xrightarrow{\cong} \Hom_\Lambda(H_2(E_\Sigma;\Lambda),\Lambda)\]
given by the map from the long exact sequence of the pair, Poincar\'{e} duality, and universal coefficients. Hence, the adjoint of the intersection form gives a presentation for  $H_1(\partial E_{\Sigma};\Lambda) \cong (\Lambda/(1-t))^2$.
The order of this $\Lambda$-module is $(t-1)^2$, and so, up to units, $b\cdot \ol{b}$ agrees with $(1-t)^2$. Thus $b = \pm t^k(1-t)$ for some $k \in \Z$. After a change of basing path and orientation choice for $[S_0]$, we can assume that $b= 1-t$, and so $\lambda$ is represented by
\[
		\begin{pmatrix}
			0 & 1-t \\
			1-t^{-1}  & d
		\end{pmatrix}
\]
for some $d \in \Lambda$.
It now follows by \cref{lemma:main-strategy} that $\Sigma$ is topologically unknotted, which concludes the proof of \cref{thm:morse}.
 \end{proof}

In the special case that the link $L$ appearing as $\Sigma \cap S^3$ is split, we will prove next that $\Sigma$ is moreover smoothly unknotted.
Note that if $L$ is split, then $\Delta_L(t)=0$, but there are links, e.g.\ 2-component boundary links, that are not split but that have vanishing Alexander polynomial.
 As $L$ is split, a theorem of Scharlemann~\cite[Main Theorem]{Scharlemann1985} implies that $L$ is the unlink and the bands are both trivial. The key to the proof will be the following lemma.

\begin{lem}\label{lemma:trivial-band}
  Consider a standard, oriented, 2-component unlink $L$ and a band $B$ with one end attached to each component of $L$, and suppose that $L \cup B$ is isotopic to the standard oriented unlink $L$ with a standard band $B'$ $($see the top of Figure~\ref{fig:band}$)$.  Then $B$ is isotopic to $B'$ relative to its ends, via an isotopy that fixes $L$ setwise throughout.
\end{lem}

\begin{proof}
If the isotopy from $L \cup B$ to $L \cup B'$ switches the components of $L$, we concatenate this isotopy with a $\pi$-rotation of $L \cup B'$ around an axis perpendicular to the plane of $L \cup B'$ through the midpoint of $B'$, which preserves $B'$. So, we can assume that, after the isotopy, $L$ is preserved as an ordered, oriented link.

  Consider the isotopy of $L \cup B$ to $L \cup B'$, and then forget the band. We obtain an isotopy of $L$ to itself. The group of such self-isotopies, considered up to isotopy of isotopy, is called the \emph{motion group} of $L$.  We are thus given an element $\gamma$ of the motion group, which we can extend to an ambient isotopy $\Gamma_t \colon S^3 \to S^3$ for $t \in I$, such that $\Gamma_0=\Id_{S^3}$,  $\Gamma_1(L)=L$ as an ordered, oriented link, and $\Gamma_1(B) = B'$.

\begin{figure}
\begingroup%
  \makeatletter%
  \providecommand\color[2][]{%
    \errmessage{(Inkscape) Color is used for the text in Inkscape, but the package 'color.sty' is not loaded}%
    \renewcommand\color[2][]{}%
  }%
  \providecommand\transparent[1]{%
    \errmessage{(Inkscape) Transparency is used (non-zero) for the text in Inkscape, but the package 'transparent.sty' is not loaded}%
    \renewcommand\transparent[1]{}%
  }%
  \providecommand\rotatebox[2]{#2}%
  \newcommand*\fsize{\dimexpr\f@size pt\relax}%
  \newcommand*\lineheight[1]{\fontsize{\fsize}{#1\fsize}\selectfont}%
  \ifx\svgwidth\undefined%
    \setlength{\unitlength}{377.35685025bp}%
    \ifx\svgscale\undefined%
      \relax%
    \else%
      \setlength{\unitlength}{\unitlength * \real{\svgscale}}%
    \fi%
  \else%
    \setlength{\unitlength}{\svgwidth}%
  \fi%
  \global\let\svgwidth\undefined%
  \global\let\svgscale\undefined%
  \makeatother%
  \begin{picture}(1,0.21002405)%
    \lineheight{1}%
    \setlength\tabcolsep{0pt}%
    \put(0,0){\includegraphics[width=\unitlength,page=1]{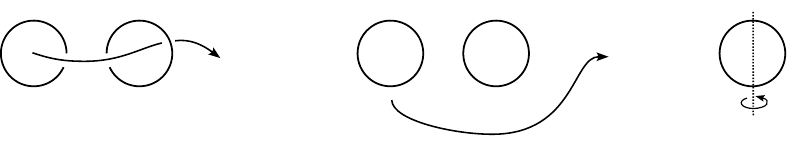}}%
    \put(0.95517783,0.20241562){\makebox(0,0)[lt]{\lineheight{1.25}\smash{\begin{tabular}[t]{l}$i$\end{tabular}}}}%
    \put(0.09888769,0.0022877){\makebox(0,0)[lt]{\lineheight{1.25}\smash{\begin{tabular}[t]{l}$\rho$\end{tabular}}}}%
    \put(0.60668828,0.0022877){\makebox(0,0)[lt]{\lineheight{1.25}\smash{\begin{tabular}[t]{l}$\sigma$\end{tabular}}}}%
    \put(0.9492284,0.0022877){\makebox(0,0)[lt]{\lineheight{1.25}\smash{\begin{tabular}[t]{l}$\tau_i$\end{tabular}}}}%
  \end{picture}%
\endgroup%

	\caption{The generators $\rho$, $\sigma$, and $\tau_i$ for $i \in \{1,2\}$ of the motion group of a 2-component link.}
	\label{fig:motion-generators}
\end{figure}

  The motion group of an unlink was shown by Goldsmith~\cite{Goldsmith} to be generated by standard isotopies; see also Brendle--Hatcher~\cite{Brendle-Hatcher} and Damiani--Kamada~\cite[Proposition~3.4]{Damiani-Kamada}. In the case of two components $L_1$ and $L_2$, there are four standard isotopies, called $\rho$, $\sigma$, $\tau_1$, and $\tau_2$. Both $\rho$ and $\sigma$ switch the positions of $L_1$ and $L_2$.   During $\rho$, the component $L_1$ passes through $L_2$ while swapping $L_1$ and $L_2$. During~$\sigma$, the components $L_1$ and $L_2$ pass past each other. The isotopy $\tau_i$ rotates the $i$th component by angle $\pi$ around an axis connecting a pair of antipodal points of $L_i$. See \cite[Figure~1]{Brendle-Hatcher}, which we have reproduced in Figure~\ref{fig:motion-generators}, except we have isotoped $\sigma$ relative to its ends to better suit the rest of the proof.  The motion group of $L$ is generated by $\{\rho, \sigma, \tau_1, \tau_2\}$. Let $w$ be a word in these generators representing the motion $\gamma$.  As $\tau_1\rho = \rho \tau_2$ and $\tau_1\sigma = \sigma \tau_2$, we can move every copy of $\tau_1$ and $\tau_2$ to the end of $w$. Using the relations $\tau_1\tau_2 = \tau_2\tau_1$, $\tau_1^2 = e$, and $\tau_2^2 = e$, we can assume that $w$ ends with $\tau_1^{\varepsilon_1} \tau_2^{\varepsilon_2}$ for $\varepsilon_1$, $\varepsilon_2 \in \{0, 1\}$. We have $\varepsilon_1 = 0$ and $\varepsilon_2 = 0$ because $L$ is oriented and the motion $\iota$ is orientation-preserving. So $\tau_1$ and $\tau_2$ can be ignored, and we can assume $w$ is a word in $\rho$ and $\sigma$.

The motion $\gamma^{-1}$ extends to an isotopy of diffeomorphisms $\Upsilon_t \colon S^3 \to S^3$ starting at $\Upsilon_0=\Id_{S^3}$. To obtain this, write $\gamma^{-1}$ as a word in the generators $\rho$, $\sigma$, and $\rho^{-1}$  (note that $\sigma^{-1}= \sigma$).  Apply isotopy extension to each to obtain the path of diffeomorphisms $\Upsilon_t$.   We have that $\Upsilon_1(L) = L$ as an ordered, oriented link. Using the simple form of the generators of the motion group, observe that $\Upsilon_1(B')$ is isotopic to $B'$, relative to $L$. More specifically, after applying $\rho$ or $\sigma$ to the standard band shown in the top row of Figure~\ref{fig:band}, we obtain the band shown in the second row of Figure~\ref{fig:band}. We then rotate each link component in the direction of the arrows in the figure through angle $\pi$, after which the band is isotopic to the standard band, keeping its ends fixed.
\begin{figure}
\begingroup%
  \makeatletter%
  \providecommand\color[2][]{%
    \errmessage{(Inkscape) Color is used for the text in Inkscape, but the package 'color.sty' is not loaded}%
    \renewcommand\color[2][]{}%
  }%
  \providecommand\transparent[1]{%
    \errmessage{(Inkscape) Transparency is used (non-zero) for the text in Inkscape, but the package 'transparent.sty' is not loaded}%
    \renewcommand\transparent[1]{}%
  }%
  \providecommand\rotatebox[2]{#2}%
  \newcommand*\fsize{\dimexpr\f@size pt\relax}%
  \newcommand*\lineheight[1]{\fontsize{\fsize}{#1\fsize}\selectfont}%
  \ifx\svgwidth\undefined%
    \setlength{\unitlength}{275.23757157bp}%
    \ifx\svgscale\undefined%
      \relax%
    \else%
      \setlength{\unitlength}{\unitlength * \real{\svgscale}}%
    \fi%
  \else%
    \setlength{\unitlength}{\svgwidth}%
  \fi%
  \global\let\svgwidth\undefined%
  \global\let\svgscale\undefined%
  \makeatother%
  \begin{picture}(1,0.71187923)%
    \lineheight{1}%
    \setlength\tabcolsep{0pt}%
    \put(0.14072,0.001107){\makebox(0,0)[lt]{\lineheight{1.25}\smash{\begin{tabular}[t]{l}$\rho$\end{tabular}}}}%
    \put(0.76689387,0.18030773){\makebox(0,0)[lt]{\lineheight{1.25}\smash{\begin{tabular}[t]{l}$\sigma$\end{tabular}}}}%
    \put(0,0){\includegraphics[width=\unitlength,page=1]{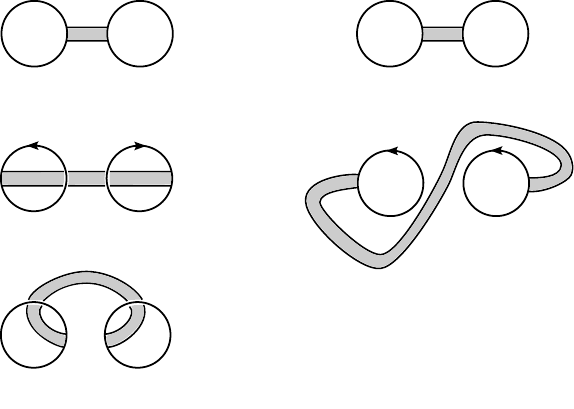}}%
  \end{picture}%
\endgroup%

\caption{The second row shows the effect of $\rho$ (left) and $\sigma$ (right) on the standard band shown in the top row. The band can then be isotoped back to the standard position by rotating each link component through angle $\pi$ in the direction of the arrow.}
\label{fig:band}
\end{figure}
This observation produces an isotopy $\Delta_t \colon S^3 \to S^3$ that fixes $L$ setwise for all $t \in I$, but sends $\Upsilon_1(B')$ back to the trivial band $B'$.  Concatenate $\Upsilon_t$ and $\Delta_t$ to obtain an ambient isotopy $\Upsilon_t \cdot \Delta_t$ representing $\gamma^{-1}$ in the motion group of $L$ that sends $B'$ to $B'$.  Now concatenate $\Gamma_t$ with $\Upsilon_t \cdot \Delta_t$ to obtain an ambient isotopy such that the motion of $L$ represents $\gamma\cdot \gamma^{-1} = e$ in the motion group and sends $B$ to $B'$. Isotope this isotopy relative to its endpoints to one where $L$ is fixed setwise for all time, which we may do since, when restricted to $L$, this gives the trivial element of the motion group. We have constructed the desired isotopy from $B$ to $B'$ that fixes~$L$ setwise throughout.
\end{proof}

\four*

\begin{proof}
  A theorem of Scharlemann~\cite[Main Theorem]{Scharlemann1985} implies, as mentioned before, that $L$ is the unlink and the bands are both trivial. By isotoping $\Sigma$ in a bicollar of the equatorial $S^3$, we may and shall assume that $L$ is in the position of the standard unlink. Let $B_{\pm}$ denote the band that determines the surface $\Sigma_{\pm} := \Sigma \cap S^4_{\pm}$, up to isotopy. Since the band is trivial, $L \cup B_{\pm}$ is isotopic to $L\cup B'$, where $B'$ is the standard band. Then, by \cref{lemma:trivial-band}, the band $B_{\pm}$ is isotopic relative to $L$ to $B'$, through an isotopy that fixes $L$ setwise. In the theory of banded unlink representations of surfaces introduced by Yoshikawa~\cite{Yoshikawa}, this corresponds to a smooth isotopy relative to boundary from $\Sigma_{\pm}$ to the standard annulus in $S^4_{\pm}$ with boundary~$L \subseteq S^3 = \partial S^4_{\pm}$. The union of two standard annuli is a smoothly unknotted torus, so the proof is complete.
\end{proof}

\section{Double branched covers}\label{sec:dbc}
	
	Given a surface $S$ in a 4-manifold $X$, we denote the double cover of $X$ branched along $S$ by $\Sigma(X, S)$. As in the case of the exotic copies of $5 \RP^2$,  $\# 6 \RP^2$, and $\# 10 \RP^2$ constructed by Mati\'c--\"{O}zt\"{u}rk--Stipsicz~\cite{matic2023exotic},  Finashin~\cite{Finashin}, and Finashin--Kreck--Viro~\cite{FKV}, respectively, one might try to distinguish the turned 1-twist-spun torus $T_{K,1,1}$ and the standard torus $T^2$ using their double branched covers. Note that $\Sigma(S^4, T^2)$ is diffeomorphic to $S^2 \times S^2$, where the covering involution is given by reflection in $S^1$ in both $S^2$ factors.

	\begin{prop}\label{prop:DBC}
		Let $K$ be a knot in $D^3$. Then $\Sigma(S^4, T_{K,1,1})$ is diffeomorphic to $(S^2 \times S^2) \# X$ for a homotopy 4-sphere $X$.
	\end{prop}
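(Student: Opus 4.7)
The plan is to use the Gluck twist description of $T_{K,1,1}$ from Boyle's construction. As noted at the end of \cref{sec:twist-spun-tori}, the pair $(S^4, T_{K,1,1})$ is obtained from $(S^4, T_{K,0,1})$ by performing a Gluck twist on $S^4$ along the standard 2-sphere $S_0 := \partial D^3 \times \{0\}$, which is disjoint from $T_{K,0,1}$; this is because the difference between the two gluings $\eta_0 = \Id$ and $\eta_1$ in \cref{def:torus} is precisely this Gluck twist. By Zeeman's theorem, the torus $T_{K,0,1}$ is smoothly isotopic to the standard torus $T^2 \subseteq S^3 \subseteq S^4$, and hence $\Sigma(S^4, T_{K,0,1}) \cong S^2 \times S^2$ smoothly.

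Since $S_0$ is disjoint from the branch locus, its preimage in the double branched cover $\Sigma(S^4, T_{K,0,1}) = S^2 \times S^2$ consists of two disjoint embedded 2-spheres $S_0^+ \sqcup S_0^-$, and the Gluck twist on $S_0$ lifts to simultaneous Gluck twists on $S_0^+$ and $S_0^-$. Equivalently, using the decomposition $\Sigma(S^4, T_{K,i,1}) \cong \bigl(\Sigma(D^3, K) \times S^1\bigr) \cup_{\eta_i} \bigl((S^2 \times D^2) \sqcup (S^2 \times D^2)\bigr)$, the only change from $i = 0$ to $i = 1$ is a Gluck twist of each capping $S^2 \times D^2$ along its core 2-sphere, and these cores are $S_0^\pm$. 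Moreover, the 3-ball $B^3 := D^3 \times \{0\} \subseteq S^4$ lifts to an embedded 3-manifold $\Sigma(B^3, K) \cong \Sigma(S^3, K) \sm (D^3 \sqcup D^3)$ in $S^2 \times S^2$ that cobounds $S_0^+$ and $S_0^-$.

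To conclude, I would argue that this result is of the form $(S^2 \times S^2) \# X$ for a homotopy 4-sphere $X$ by exhibiting two smoothly embedded 2-spheres $A, B \subseteq \Sigma(S^4, T_{K,1,1})$ representing the factor classes of $H_2(S^2 \times S^2) \cong \Z^2$ and meeting transversely in a single point. Then a regular neighborhood of $A \cup B$ is diffeomorphic to $(S^2 \times S^2) \sm \Int B^4$, and by \cref{thm:Boyle} its complement is topologically a 4-ball, hence a smooth homotopy 4-ball whose double yields~$X$. Natural candidates for $A$ and $B$ are obtained from the factor spheres of $S^2 \times S^2 = \Sigma(S^4, T_{K,0,1})$, modified across tubular neighborhoods of $S_0^\pm$ using the cobounding 3-manifold $\Sigma(B^3, K)$. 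The main obstacle is to make this explicit and verify that the pair $(A, B)$ survives the Gluck twists; the smooth triviality of the original Gluck twist on $S_0 \subseteq S^4$ (since $S_0$ bounds $B^3$) should provide the necessary compatibility through the branched cover.
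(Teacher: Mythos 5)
Your Gluck twist framing is conceptually sound, and the lifting picture is correct: $S_0$ lifts to a pair of disjoint spheres $S_0^+ \sqcup S_0^-$, and the downstairs Gluck twist becomes a simultaneous pair of Gluck twists upstairs. But the actual construction never happens: you do not produce the spheres $A$ and $B$, and you acknowledge this (``the main obstacle''). The closing hope that ``the smooth triviality of the original Gluck twist on $S_0 \subseteq S^4$ \dots should provide the necessary compatibility'' does not hold up. The diffeomorphism that undoes the downstairs Gluck twist does not carry $T_{K,0,1}$ to itself --- if it did, $T_{K,1,1}$ would be smoothly standard, which is open --- and it is precisely the simultaneous Gluck twists on $S_0^\pm$ that could alter the smooth structure of $S^2 \times S^2$. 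Smooth triviality downstairs therefore gives no control upstairs, and this step is, if anything, circular. (A smaller slip: the homotopy 4-sphere $X$ you want is the complement $W$ capped off with a standard $B^4$, not the double of~$W$.)

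The paper's proof avoids the Gluck twist interpretation entirely and works directly in $\Sigma(S^4, T_{K,1,1})$. It lifts $S_0$ to an embedded $0$-framed sphere $S$, and lifts an explicit meridional disc $D$ (with $\partial D \subseteq T_{K,1,1}$) to a sphere $S'$ with even self-intersection (in fact $4$, via the framing computation in the proof of \cref{thm:Boyle}). Since $|S \pitchfork S'| = 1$, the neighbourhood $N(S \cup S')$ is a plumbing of spheres of Euler numbers $0$ and $4$, which by Neumann--Weintraub is $(S^2 \times S^2) \setminus D^4$; combined with \cref{thm:Boyle} and Freedman, the complement is a homotopy $4$-ball. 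Two differences to absorb: the paper does not insist that both spheres be $0$-framed ``factor classes'' (a plumbing with Euler numbers $0$ and $2k$ already gives $(S^2 \times S^2) \setminus D^4$), which is a flexibility your plan forgoes; and the paper never needs any compatibility across a Gluck twist, because its generating spheres are built from scratch in the branched cover rather than transported from $\Sigma(S^4, T_{K,0,1})$.
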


	\begin{proof}
		Using the notation of Definition~\ref{def:torus}, note that both $\phi_j$ and $\eta_i|_{S^2 \times \{\theta\}}$ for $\theta \in S^1$ fix the $z$-axis pointwise. Let
		\[
		J := \{0,0\} \times [1/2, 1] \subseteq D^3.
		\]
		Isotope $K$ such that $K \cap J = \{(0,0,1/2)\}$. Then
		\[
		D := \frac{J \times I}{\{(x,0) = (\phi_1(x), 1)  :  x \in J\}} \cup_{\eta_1} (\{(0,0,1)\} \times D^2)
		\]
		is a disc in $S^4$ with boundary on $T_{K,1,1}$. Furthermore, consider the sphere
		\[
		S_0 := \partial D^3 \times \{0\} \subseteq S^4
		\]
		disjoint from $K \times \{0\}$. Then $S_0$ is disjoint from the branch locus $T_{K,1,1}$ and is simply connected, so it lifts to a pair of spheres in $\Sigma(S^4, T_{K,1,1})$, one of which we denote by $S$. Since~$S_0$ has self-intersection zero, so does $S$. As $D \cap T_{K,1,1} = \partial D$, the disc $D$ lifts to a sphere $S'$ in $\Sigma(S^4, T_{K,1,1})$, which has even self-intersection (in fact, it is four by the proof of Theorem~\ref{thm:Boyle}). Note that $\partial D^3 \cap J = \{(0,0,1)\}$, so $|S_0 \cap D| = 1$. It follows that $|S \pitchfork S'| = 1$, so $N(S \cup S')$ is a plumbing of two spheres of self-intersection zero and four, respectively, and is hence diffeomorphic to $(S^2 \times S^2) \setminus D^4$; see Neumann--Weintraub~\cite{plumbings}.
		
		As $T_{K,1,1}$ and $T^2$ are topologically isotopic by Theorem~\ref{thm:Boyle} and $\Sigma(S^4, T^2) \cong S^2 \times S^2$, it follows that~$\Sigma(S^4, T_{K,1,1})$ is homeomorphic to $S^2 \times S^2$. Consequently,
		\[
		\Sigma(S^4, T_{K,1,1})  \setminus N(S \cup S')
		\]
		is a homotopy 4-disc, and the result follows.
	\end{proof}

Let $M$ be a closed, connected, smooth, and oriented 4-manifold $M$ with $b_1(M) = 0$ and $b_2^+(M) > 1$. Furthermore, let $o$ be an orientation of $H^2_+(M; \R)$, which is called a \emph{homology orientation}. The Seiberg--Witten invariant of $M$ is a map
\[
\SW_{M,o} \colon \Spin^c(M) \to \Z,
\]
where $\Spin^c(M)$ is the set of $\Spin^c$ structures on $X$, which is an affine space modelled on $H^2(M)$. This satisfies $\SW_{M,-o} = -\SW_{M,o}$. If $\Phi \colon M' \to M$ is a diffeomorphism, $o$ is a homology orientation of $M$, and $\fs \in \Spin^c(M)$, then
\[
\SW_{M', \Phi^* o}(\Phi^* \fs) = \SW_{M, o}(\fs).
\]
We say that the Seiberg--Witten invariants of $M$ and $M'$ \emph{agree} if there is an affine isomorphism $\phi \colon \Spin^c(M) \to \Spin^c(M')$ such that
\[
\SW_{M',o'} \circ \phi = \pm \SW_{M,o}
\]
for arbitrary homology orientations $o$ of $M$ and $o'$ of $M'$.

By Kronheimer and Mrowka~\cite{KM-Thom} (see also Gompf--Stipsicz~\cite[Theorem~2.4.8]{Gompf-Stipsicz}), if $\Sigma$ is a smoothly embedded, connected, and oriented surface of genus $g$ in $M$ with self-intersection $[\Sigma]^2 \ge 0$ and $[\Sigma] \neq 0$ in $H_2(M)$, then
\begin{equation}\label{eqn:adjunction}
2g -2 \ge [\Sigma]^2 + |\langle c_1(\fs), [\Sigma] \rangle|
\end{equation}
whenever $\SW_{M, o}(\fs) \neq 0$. This is known as the \emph{generalised andjunction formula}.

One can also define $\SW_{M,o}(\fs) \in \Z$ when $b_1(X) = 0$ and $b_2^+(M) = 1$, but it also depends on a choice of two possible metric chambers $c_+$ and $c_-$. When the expected dimension of the Seiberg--Witten moduli space
\[
d(\fs) := \left(c_1(\fs)^2 - 2\chi(M) - 3\sigma(M) \right)/4 \in 2\Z_{\ge 0},
\]
then the dependence of the Seiberg--Witten invariant on the choice of chamber is described by the \emph{wall-crossing formula}
\begin{equation}\label{eqn:wall-crossing}
\SW_{M,o}(\fs, c_-) = \SW_{M,o}(\fs, c_+) + (-1)^{d(\fs)/2}
\end{equation}
due to Kronheimer and Mrowka~\cite{KM-Thom}.
We have the following consequence of Proposition~\ref{prop:DBC}.

\begin{cor}
Let $K$ be a knot in $S^3$. Then the Seiberg--Witten invariants of $\Sigma(S^4, T_{K,1,1})$ and $S^2 \times S^2$ agree.
\end{cor}

	\begin{proof}
	For a closed, connected, smooth, and oriented 4-manifold $M$ with $b_2^+(M) = 1$ and $b_2^-(M) < 10$, there is a canonical metric chamber for the Seiberg--Witten invariant; see Szab\'o~\cite[Lemma~3.2]{Szabo}. Furthermore, the generalised adjunction formula holds with this choice. Hence, if $M$ is $(S^2 \times S^2) \# X$ for a homotopy 4-sphere $X$, then, for every $\fs \in \Spin^c(M)$, we have $\SW_{M, o}(\fs, c) = 0$ for the canonical metric chamber $c$ by the generalised adjunction formula~\eqref{eqn:adjunction} applied to the self-intersection zero, homologically non-trivial 2-sphere $\Sigma := S^2 \times \{p\} \subseteq M$ for $p \in S^2$ (as the inequality $-2 \ge |\langle c_1(\fs), [\Sigma] \rangle|$ does not hold for any $\fs$). Hence, the Seiberg--Witten invariants of $\Sigma(S^4, T_{K,1,1})$ and $S^2 \times S^2$ both vanish in the canonical metric chambers, and also agree in the other chamber by the wall-crossing formula~\eqref{eqn:wall-crossing}.
	\end{proof}

	In particular, we cannot use the Seiberg--Witten invariants of the double branched covers to distinguish $T_{K,1,1}$
 from $T^2$ up to diffeomorphism of pairs. However, this might still be a possibility if $T$ has four critical points and $T \cap S^3$ is not split, but we do not have any such examples that are not already known to be smoothly standard.
	
	We finally consider the double cover $M$ of $S^4$ branched along the Cochran--Davis Seifert surface union a ribbon disc.	By Theorem~\ref{thm:ribbon}, the 4-manifold $M$ is homeomorphic to $S^2 \times S^2$. On the other hand, we have the following.
	
\begin{prop}
	Let $M$ be the double cover of $S^4$ branched along the surface $\Sigma$ constructed in Theorem~\ref{thm:ribbon}. Then $M$ can be obtained from $S^2 \times S^2$ using knot surgery along a 0-homologous torus.
\end{prop}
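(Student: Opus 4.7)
My plan is to lift the knot-surgery description of $\Sigma$ from \cref{thm:ribbon} to the double branched covers. First, applying \cref{lem:slice-derivative} to the slice derivative $\alpha'$ of $F'$ shown in \cref{fig:CD-knot-1} (which compresses $F'$ to the ribbon disc $\Delta$) shows that $\Sigma' := F' \cup \Delta$ is smoothly unknotted. Hence the double branched cover $\Sigma(S^4, \Sigma')$ is diffeomorphic to $S^2 \times S^2$.

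Next, recall from the proof of \cref{thm:ribbon} that $\Sigma$ is obtained from $\Sigma'$ by knot surgery along the torus $T_A = A_+ \cup A_-$ with pattern $J$, and that $T_A$ is disjoint from $\Sigma'$. Since the branch locus $\Sigma'$ is disjoint from the tubular neighbourhood of $T_A$ in which the surgery takes place, the surgery lifts equivariantly to the double branched cover: the branched covering restricts to an honest double cover on the surgery locus, and knot surgery is defined purely in terms of the complement of $T_A$ and $E_J \times S^1$. Denoting by $\tilde T_A \subseteq S^2 \times S^2$ the preimage of $T_A$, the corresponding lifted knot surgery on $\tilde T_A$ with pattern $J$ then produces precisely $M = \Sigma(S^4, \Sigma)$.

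It remains to verify that $\tilde T_A$ is null-homologous in $S^2 \times S^2$. I would do so by exhibiting a bounding $3$-chain downstairs. Since $T_A$ is smoothly unknotted in $S^4$, it bounds a solid torus $V$; the goal is to arrange $V$ to be disjoint from $\Sigma'$. For this, I would use the symmetric construction of $T_A$ from the annulus $A \subseteq D^4$ bounded by the unlink $\eta_1 \sqcup \eta_2$: in the Cochran--Davis construction, the curves $\eta_i$ have linking number zero with $R$ (indeed they lie in the second commutator subgroup of $\pi_1(S^3 \setminus R)$, as required for the infection to preserve the Alexander module), and the analogous vanishing holds for a longitudinal curve on $T_A$ that passes through $A_+$ and then back through $A_-$, by a direct linking computation against a Seifert $3$-manifold for $\Sigma'$. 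It follows that $T_A$ is null-homologous in $S^4 \setminus N(\Sigma')$, and lifting a bounding $3$-manifold gives $\tilde V \subseteq S^2 \times S^2$ with $\partial \tilde V = \tilde T_A$.

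The main obstacle will be making the last linking-number computation rigorous, i.e.\ showing that the class of $T_A$ in $H_2(S^4 \setminus N(\Sigma'))$ vanishes. Using the description of $H_2(S^4 \setminus N(\Sigma')) \cong \Z^2$ via rim tori together with Alexander duality, this reduces to verifying that the algebraic intersection of $T_A$ with each pushed-off generator of $H_1(\Sigma')$ is zero, which can be read off the explicit picture of the Cochran--Davis annulus $A$ together with the vanishing linking numbers of $\eta_1$ and $\eta_2$ with $R$.
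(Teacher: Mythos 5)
Your overall plan matches the paper's: identify $\Sigma(S^4, \Sigma') \cong S^2 \times S^2$, observe that the satellite construction producing $\Sigma$ from $\Sigma'$ is a knot surgery on $T_A = A_+ \cup A_-$ disjoint from $\Sigma'$, lift this surgery to the branched double cover, and argue that the lifted torus is null-homologous. However, there is a genuine gap: you never verify that the preimage $p^{-1}(T_A)$ is connected, and simply name it $\tilde T_A$ and treat it as a single torus. This is exactly the point the paper has to work for. The branched double cover is classified by the epimorphism $\pi_1(S^4 \sm \Sigma') \to \Z \to \Z/2$, and $p^{-1}(T_A)$ is connected if and only if some loop on $T_A$ maps to an odd integer in $H_1(S^4 \sm \Sigma') \cong \Z$. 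The paper proves this by writing down an explicit longitude $\ell$ of $T_A$ (built from the cores of the two saddle bands and the vertical arcs over $x_1,x_2$) and producing a disc $D_A$ with $\partial D_A = \ell$ and $|D_A \pitchfork \Sigma'| = 1$, so $\ell$ generates $H_1(S^4 \sm \Sigma')$. Without this, the statement that one performs knot surgery along \emph{a} torus (rather than a disjoint pair of tori) is unjustified.

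On the null-homology of the lifted torus, your route is different from the paper's and, as you acknowledge, incomplete. You propose showing that $[T_A] = 0$ in $H_2(S^4 \sm N(\Sigma')) \cong \Z^2$, by pairing against $H_1(\Sigma')$ via Alexander duality, so that a bounding 3-manifold disjoint from $\Sigma'$ lifts to a 3-manifold bounding $\tilde T_A$. That is a clean strategy but strictly stronger than necessary and you leave the key computation undone. The paper takes the shortcut that $T_A$ already bounds a 3-manifold $N$ in $S^4$ because $H_2(S^4) = 0$, makes $N$ transverse to $\Sigma'$, and uses the preimage $p^{-1}(N)$ as a 3-chain bounding $T := p^{-1}(T_A)$. (One should be a little careful here: $p^{-1}(N)$ has a transverse-double-curve singularity along $p^{-1}(N \cap \Sigma')$ and so is not literally a 3-manifold, but it is still a $3$-cycle relative to $T$, or can be resolved to a manifold, so the homological conclusion $[T] = 0 \in H_2(M)$ holds.) Either route is fine in principle; yours trades a transversality subtlety for a linking computation, but as written the linking computation is only a sketch, and the connectedness of $p^{-1}(T_A)$ is missing entirely.
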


\begin{proof}
	We use the notation of Section~\ref{sec:Cochran-Davis-examples}. Let us write
	\[
	S^4_\pm := \frac{S^3 \times I_\pm}{S^3 \times \{\pm 1\}},
	\]
	where $I_+ = [0,1]$ and $I_- = [-1,0]$.
	Let the Seifert surface pushed into $S^4_-$ be
	\[
	\digamma' := (R \times [-1/8, 0]) \cup (F' \times \{-1/8\}).
	\]
	\begin{enumerate}[font=\normalfont, label = (\roman*), ref= \roman*]
		 \item \label{it:pushed-in-A}   We can arrange that $A_- \cap (S^3 \times [-1/2,0])$ takes the form $(\eta_1 \sqcup \eta_2) \times [-1/2,0]$, i.e.\  $A_-$ is a product in this collar.
		\item  We assume that the saddle moves used to construct $A_{-}$ are performed in $S^3 \times [-3/4,-1/2]$, and the disc capping it off (i.e.\ the minimum of the projection to $[-1,0]$ restricted to $A_-$) lies in $S^3 \times [-1,-3/4]$.
		\item   We suppose that $A_+ \cap (S^3 \times [0,1/2])$ takes the form $(\eta_1 \sqcup \eta_2) \times [0,1/2]$.
		\item  We assume that the saddle moves used to construct $A_{+}$ and $\Delta$ are performed in $S^3 \times [1/2,3/4]$, and the discs capping them off (i.e.\ the maxima of the projection to $[0,1]$ restricted to $A_+ \cup \Delta$) lie in $S^3 \times [3/4,1]$.
	\end{enumerate}

	Consider the torus $T_A := A_+ \cup A_-$ in $S^4 \setminus \Sigma'$, where
	\[
	\Sigma' = \Delta \cup \digamma' \subseteq S^4_+ \cup S^4_-.
	\]
	Let $p \colon M \to S^2 \times S^2$ be the covering map. We now show that $T := p^{-1}(T_A)$ is connected, and hence a torus. The curve $\eta_1$ is a meridian of $T_A$. Since $\lk(\eta_1, R) = 0$ and $\Sigma' \cap S^3 = R$, the meridian of $T_A$ is 0-homologous in $S^4 \setminus \Sigma'$.
	
	We claim that a longitude of $T_A$ represents a generator of $H_1(S^4 \setminus \Sigma') \cong \Z$. Recall that the annulus~$A_\pm$ is obtained by performing a saddle move on $\eta_1 \cup \eta_2$ and capping off the resulting unknot with a disc. Suppose that the saddle is attached at points $x_1 \in \eta_1$ and $x_2 \in \eta_2$; see Figure~\ref{fig:arc}. We can assume that $x_1$ and $x_2$ lie on opposite sides of the Seifert surface $F'$. Suppose that the saddle point of~$A_\pm$ lies in $S^3 \times \{t_\pm\}$, and let $c_\pm$ be the core of the band that is attached. Then
	\[
	\ell := \bigl(\{x_1, x_2\} \times [t_-, t_+] \bigr) \cup c_+ \cup c_-
	\]
	is a longitude of $T_A$.
	\begin{figure}
		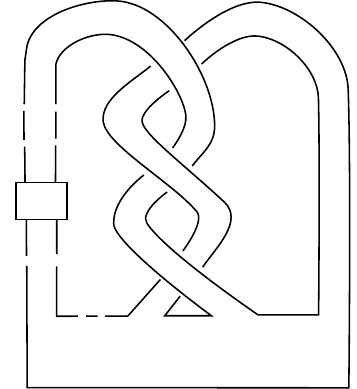
		\caption{The points $x_1 \in \eta_1$ and $x_2 \in \eta_2$, together with the arc $a$ connecting them that intersects $F'$ in a single point.}
		\label{fig:arc}
	\end{figure}
	Let $a$ be a straight arc in $S^3$ connecting $x_1$ and $x_2$ such that $|a \pitchfork F'| = 1$; see Figure~\ref{fig:arc}. Then $D_A := a \times [t_-, t_+]$ gives a disc in $S^4$ with boundary $\ell$. We have $D_A \pitchfork \digamma' = \{(a \cap F', -1/8)\}$.
	Furthermore, $D_A \cap \Delta = \emptyset$, because we attach the saddle to $\eta_1 \cup \eta_2$ to obtain $A_+$ before we attach the saddle to $R$ when forming $\Delta$. It follows that $|D_A \pitchfork \Sigma'| = 1$. Hence $\ell$ represents a generator of $H_1(S^4 \setminus \Sigma')$, as claimed. Consequently,  the pre-image $p^{-1}(T_A)$ is connected.
	
	By construction, we obtain $\Sigma$ from $\Sigma'$ by performing knot surgery along $T_A$ with pattern some knot~$J$ in $S^3$ (in the main Cochran--Davis example, $J$ is the left-handed trefoil). So we can obtain $M$ from $S^2 \times S^2$ by knot surgery along the torus $T$. Since $H_2(S^4) = 0$, the torus $T_A$ bounds a 3-manifold~$N$ in $S^4$. Make $N$ transverse to $\Sigma'$. Then $p^{-1}(N)$ is a 3-manifold in $M$ with boundary $T$. Hence $T$ is 0-homologous, as claimed.
\end{proof}

The knot surgery formula of Fintushel and Stern~\cite{FSKnotSurgery} only applies when the torus along which we surger is homologically essential. Furthermore, $S^2 \times S^2$ has vanishing Seiberg--Witten invariants in the relevant chamber. Hence, we cannot use the knot surgery formula to distinguish $M$ from $S^2 \times S^2$, and consequently $\Sigma$ from $T^2$.

	\bibliographystyle{alpha}
	\bibliography{topology}
	
\end{document}